\DeclareMathOperator{\trace}{trace}
\DeclareMathOperator{\Ad}{Ad}
\newcommand{\bbar}{\begin{pmatrix}}
\newcommand{\ebar}{\end{pmatrix}}
\newcommand{\bdm}{\begin{displaymath}}
\newcommand{\edm}{\end{displaymath}}
\newcommand{\beq}{\begin{equation}}
\newcommand{\beqa}{\begin{eqnarray}}
\newcommand{\beqas}{\begin{eqnarray*}}
\newcommand{\eeq}{\end{equation}}
\newcommand{\eeqa}{\end{eqnarray}}
\newcommand{\eeqas}{\end{eqnarray*}}
\newcommand{\dd}{\textup{d}}
\newcommand{\C}{{\mathbb C}}
\newcommand{\real}{{\mathbb R}}
\newcommand{\SSS}{{\mathbb S}}
   \newtheorem{theorem}{Theorem}[section]
   \newtheorem{proposition}[theorem]{Proposition}
   \newtheorem{lemma}[theorem]{Lemma}
   \newtheorem{definition}[theorem]{Definition}
 \theoremstyle{remark}
   \newtheorem{example}[theorem]{Example}
   \newtheorem{remark}[theorem]{Remark}
\numberwithin{equation}{section}
\begin{document}

\title[Pseudospherical surfaces with singularities]{Pseudospherical surfaces with singularities}

\begin{abstract}
We study a  generalization of constant Gauss curvature $-1$ surfaces in Euclidean $3$-space,
based on Lorentzian harmonic maps, that we call pseudospherical frontals.
We analyze the singularities of these surfaces, dividing them into those of characteristic 
and non-characteristic type.   We give methods for constructing all non-degenerate
singularities of both types, as well as many degenerate singularities.
We also give a method for solving the singular geometric Cauchy problem: construct a 
pseudospherical frontal containing a given regular space curve as a non-degenerate singular curve.
The solution is unique for most curves, but for some curves there are infinitely many solutions, and
this is encoded in the curvature and torsion of the curve.

 \end{abstract}

\author{David Brander}
\address{Department of Applied Mathematics and Computer Science\\ Matematiktorvet, Building 303 B\\
Technical University of Denmark\\
DK-2800 Kgs. Lyngby\\ Denmark}
\email{dbra@dtu.dk}

\keywords{Differential geometry, integrable systems, loop groups, pseudospherical surfaces, constant Gauss curvature, singularities}
\subjclass[2000]{Primary 53A05, 53C43; Secondary 53C42}


\maketitle


\section{Introduction}
It is a well known theorem of Hilbert that there do not exist complete isometric immersions in $\real^3$ of
surfaces with constant negative Gauss curvature $K=-1$.
These surfaces have nevertheless been much
studied since classical times.  The integrability condition is the sine-Gordon equation $\phi_{xy}= \sin \phi$,
where $x$ and $y$ are unit speed asymptotic coordinates and $\phi$ is the angle between the
asymptotic directions.  Most of the literature on these surfaces deals with them via the solutions
of this equation, naturally leading to singularities along the curves $\phi=n \pi$ for integers $n$.
A more general approach for pseudospherical surfaces
 is the formulation in terms of Lorentz-harmonic maps.  The Gauss map $N$ of a 
pseudospherical surface is harmonic with respect to the Lorentzian metric induced by the second 
fundamental form.    Conversely, if we restrict to \emph{weakly regular} harmonic maps,
i.e. those where the derivatives $N_x$ and $N_y$ with respect to a null coordinate system never vanish,
then these maps correspond to solutions of the sine-Gordon equation. 
The associated surfaces are called weakly regular pseudospherical surfaces, and this has been the
standard class of pseudospherical surfaces investigated in the literature.

In this article we aim to study the natural singularities of pseudospherical surfaces. 
We will drop the weak regularity assumption, as it serves only to
make a connection with the sine-Gordon equation.  This connection is not needed in the harmonic
map approach.  Given a harmonic map $N: S \to \SSS^2$, from
a simply connected Lorentz surface, there is a canonically associated map $f: S \to \real^3$, unique up to a translation,
 that is pseudospherical wherever it is immersed, and such that $\dd f$ is orthogonal to $N$ 
(see Section \ref{harmonicmapssection}).
We take such maps $f$ as the  definition of a generalized pseudospherical surface.

Abandoning the identification with solutions of the sine-Gordon equation is advantageous for two reasons.
In the first place, in order to solve the Cauchy problem along an arbitrary non-characteristic curve,
it is necessary to choose asymptotic coordinates $(x,y)$ such that the curve is given by $y=\pm x$.
This can always be achieved, at least locally, but not if we require that the coordinate lines are 
\emph{constant speed}, the choice for which the angle between the coordinate curves 
is a solution of sine-Gordon.  The second advantage is that we are interested in the natural singularities
of these surfaces, and for many of these (for example the bifurcating cusp lines in figure \ref{viviani_fig},
or the rank zero singularities in Figure \ref{rankzero_figure}), there is no corresponding local solution of
the sine-Gordon equation.

 We will use a variant of the generalized d'Alembert method given by M. Toda \cite{todaagag} to study 
the surfaces.  In brief, a loop group
 lift $\hat F$ of a harmonic map  is obtained, via integration and a loop group decomposition, from a 
\emph{potential pair} $(\hat \chi, \hat \psi)$ of loop algebra valued $1$-forms along a pair
of transverse null-coordinate lines. Essentially, the solution is thus given by more or less arbitrary functions of one variable along two characteristic lines, in analogue with the d'Alembert solution of the wave equation.
 The challenge is to find the potentials that correspond to
particular geometric properties, as the geometry is difficult to see in the potentials.  
To address this problem, in joint work with
 M. Svensson \cite{dbms1}, we defined special potentials that allow one to solve a geometric Cauchy problem: 
 find a surface that contains a given curve with prescribed surface normal.  Here we generalize these
potentials to the case where the curve is required to be a \emph{singular} curve, in place of prescribing the
surface normal.

\subsection{Main results}
A \emph{frontal} is a differentiable map $f$ from a surface $M$ into $\real^3$ that locally has a well-defined unit
normal, that is a map $N$ into $\SSS^2 \subset \real^3$ such that $\dd f$ is orthogonal to $N$.
Generalized pseudospherical surfaces, as defined here, are frontals, and we may thus call them
 \emph{pseudospherical frontals}. If the map $(f,N): M\to \real^3 \times \SSS^2$ is everywhere regular, then $f$ is called a \emph{(wave) front}.    A pseudospherical frontal is a wave front if and only if it is weakly regular.   That is, wave front solutions are exactly those that 
correspond to solutions of the sine-Gordon equation.

A point $p$ on a frontal $f$ is called a \emph{singular point} if the derivative $\dd f$ has rank less than $2$ at
$p$, and the local singular locus is called a \emph{singular curve}. The singular point $p$ is
\emph{non-degenerate} if the singular curve is locally a regular curve in $M$.  The image  in $\real^3$ of a non-degenerate singular curve need not be a regular curve,
demonstrated by the case of a swallowtail singularity 
or a cone singularity (Figure \ref{singularitiesfig}).
  Below we will divide non-degenerate singular curves into two types, 
\emph{characteristic} singular curves that are always tangent to a null coordinate direction, and
\emph{non-characteristic}, those that are never tangent to a null direction.

Theorem \ref{mainthm1} gives the potentials for constructing 
all non-degenerate non-characteristic singular curves, together with the conditions
on the data for cuspidal edges, swallowtails and cone singularities.  We then use this to prove 
Theorem \ref{mainthm2}, which states that given an arbitrary space curve with non-vanishing
curvature $\kappa$, and torsion $\tau \neq \pm 1$, there is a \emph{unique} pseudospherical wave front that
contains this curve as a cuspidal edge.  Moreover, the potentials are given by a very
simple formula in terms of $\kappa$ and $\tau$. We use this formula to compute several 
examples. In fact the potentials in Theorem \ref{mainthm1} generate a pseudospherical frontal
from an arbitrary pair of functions $\kappa$ and $\tau$. At a point where $\kappa$ vanishes,
the singular curve is degenerate. At a point where $|\tau|=1$ the surface is a frontal but not a wave front,
and the singular curve is also degenerate.
Examples are shown in Figures  \ref{viviani_fig}, \ref{inflectionfig1} and  \ref{figureunbounded}.

\begin{figure}[ht]
\centering
$
\begin{array}{cc}
\includegraphics[height=45mm]{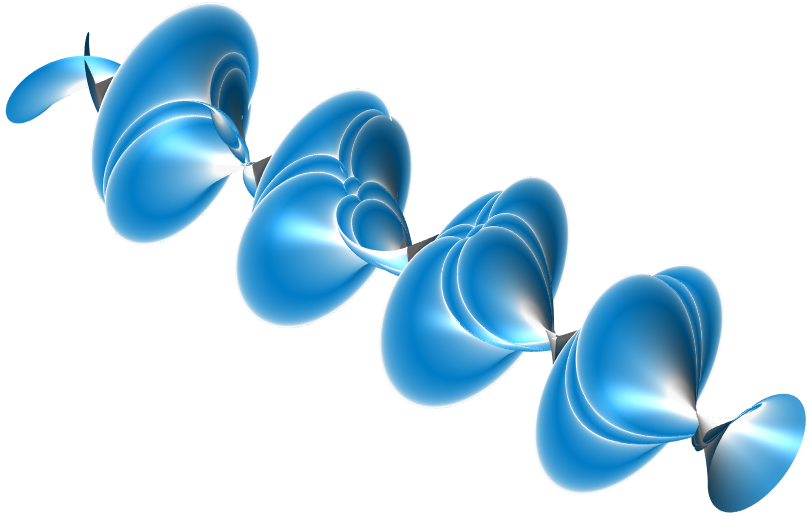}   \quad & \quad 
\includegraphics[height=42mm]{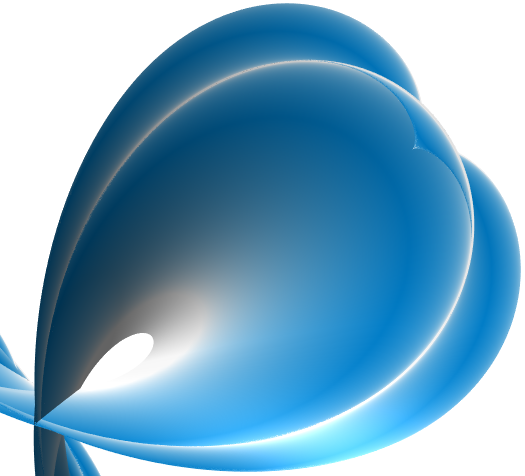}  
\end{array}
$
\caption{The pseudospherical surface
 generated by a Viviani figure 8 space curve. The curve has non-vanishing curvature, but
$|\tau|=1$ at four points. This surface is a frontal but not a wave front. (Example \ref{degenexamples}).  }
\label{viviani_fig}
\end{figure}

In Section \ref{charsection} we analyze the problem for \emph{characteristic} singular curves.
These singularities are non-generic, but nevertheless of some interest.
For example, a \emph{weakly regular} pseudospherical surface (i.e. a wave front) contains a non-degenerate
characteristic singular curve if and only if this curve is a straight line segment.  
For a general frontal, the singular curve, if it is not a straight line, must instead have non-vanishing
curvature and constant torsion $\tau = \pm 1$, incidentally the same conditions that are satisfied by
asymptotic curves on a \emph{regular} pseudospherical surface.  In the characteristic case, the solution is not unique,
and there are infinitely many pseudospherical frontals containing a prescribed curve of
the allowed type.  We give the precise statement and
 the potentials for all solutions in Theorem \ref{mainthm3}.  

\begin{figure}[ht]
\centering
$
\begin{array}{cc}
\includegraphics[height=25mm]{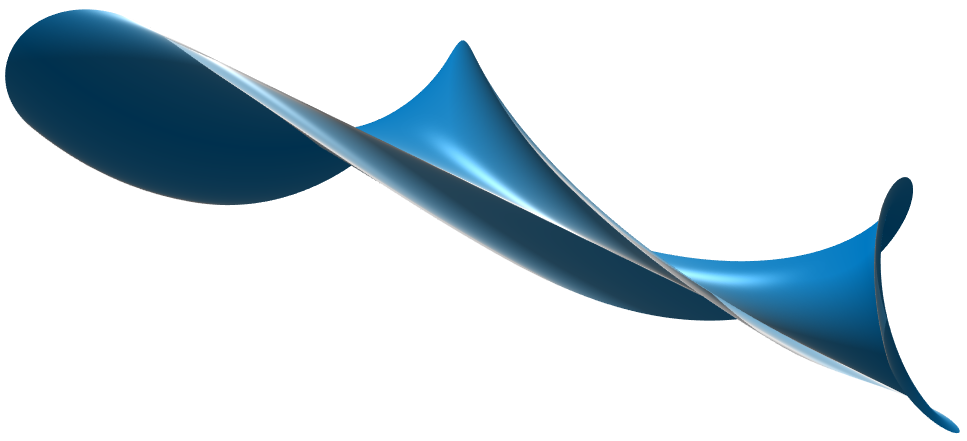} \quad   & \quad 
\includegraphics[height=25mm]{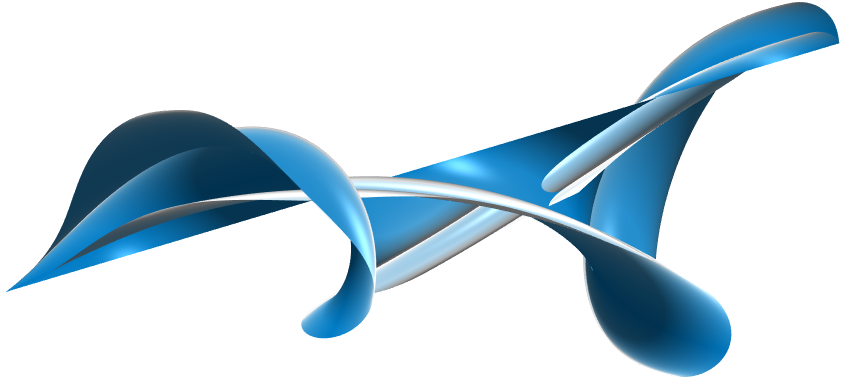}
\end{array}
$
\caption{Left: one of many pseudospherical fronts that contain a straight line as a singular curve: 
Theorem \ref{mainthm3}, with  $\kappa=0$, $\alpha=1$, $\beta(t)=t$. Right:
Example \ref{nonfrontlinesex},  a higher  order ``cuspidal
edge'', $\kappa=0$, $\alpha(t)=t^2$, $\beta(t)=t$. This surface is not a wave front. } \label{linefig}
\end{figure}
We have computed many examples of solutions using a numerical implementation of the generalized
d'Alembert method.\footnote{Currently available at 
\href{http://davidbrander.org/software.html}{http://davidbrander.org/software.html}}
We have tried to include some representative images throughout the article,  as well as further
examples illustrating degenerate singularities and surfaces generated from curves with unbounded
curvature in Section \ref{numericssection}.  The surfaces are colored here according to mean curvature,
which generally blows up near singularities, showing the singular curves more clearly.

\subsection{Concluding remarks}
This work is part of a series  investigating how to analyze the singularities arising naturally in 
integrable systems formulations of geometric problems \cite{brs, sbjorling, dbms2, spherical}.
The singularities in each case studied arise in a different way.  For spacelike and timelike constant mean curvature surfaces
in Minkowski $3$-space \cite{brs, sbjorling,dbms2},  singularities are caused by
the break down of the Iwasawa  and Birkhoff loop group decompositions
for non-compact groups.  Approaching such points, the direction of the surface normal becomes null, and so the (harmonic) unit
normal is not defined.  For constant Gauss curvature surfaces in $\real^3$ the loop group decompositions are globally defined,
and hence the unit normal is well-defined everywhere, but this does not guarantee that
the surface is regular.  This is because the unit normal is harmonic with respect to the metric induced by the \emph{second}
fundamental form, so the existence of conformal coordinates with respect to this metric does not imply 
surface regularity.   Positive curvature surfaces, studied in \cite{spherical},  differ substantially in treatment from 
negative curvature surfaces, because the former constitute an elliptic problem corresponding to Riemannian harmonic maps,
and the latter case, treated here, is hyperbolic and corresponds to Lorentzian harmonic maps. 

We generally consider maps to be in the smooth category.  The methods we use 
involve only integration and loop group decompositions, which preserve smoothness: if 
real analytic data are given, then the solutions are also real analytic. Our solutions, as frontals,
are defined globally, because the Birkhoff decomposition used is shown in \cite{jgp} to be global.
We work with a simply connected (which implies contractible) Lorentz surface $S$.
For non-trivial topologies this amounts to working on the universal cover. Note, however,
that by Kulkarni's theorem \cite{kulkarni}, there are infinitely many Lorentzian conformal structures
on the plane, and not all of these can be realized as conformal submanifolds of the Lorentz plane $\real^{1,1}$.
This raises interesting questions for the global theory of pseudospherical frontals. 

Andrey Popov  \cite{popov} proved the existence and uniqueness part of our Theorem \ref{mainthm2},
by using  the sine-Gordon equation.  The potentials given in Theorem \ref{mainthm2} improve
 this result by including solutions for
curves where  $\kappa$ vanishes or $|\tau|$ takes the value $1$, and by providing 
a means of easily computing the solutions. 
   Popov concluded that a pseudospherical
surface is uniquely determined by a cuspidal edge on its boundary, but this is not strictly accurate: even if we 
restrict to the class of pseudospherical wave fronts (as he did), there exist cuspidal edges (necessarily straight lines) that 
are characteristic curves. For such a curve, there are infinitely many different pseudospherical wave fronts 
that contain it as a cuspidal edge.

An important motivation for studying the singularities of pseudospherical surfaces is to characterize
the natural boundaries of the regular surfaces, given that there are no complete immersions.
See, e.g., \cite{amsler, wissler}.
Generalizations that \emph{include} the singular curves as a part of the surface have previously been studied
 within the framework of weakly regular surfaces. In this article, we construct real analytic pseudospherical frontals 
(Examples \ref{nonfrontlinesex}  and \ref{deg_spiralex} ) that are
immersed on open dense sets, but have non-degenerate
singular curves where the surface is not weakly regular. This demonstrates that the weakly regular framework is not
sufficiently general for the task of including even \emph{regular} boundary curves of immersed pseudospherical surfaces.  Given this, and 
the direct relationship
between arbitrary Lorentz harmonic maps and globally defined pseudospherical frontals,
we conclude that frontals are a more natural candidate 
for a global theory of pseudospherical surfaces.

\section{Generalized pseudospherical surfaces}  
We first summarize necessary background material on pseudospherical surfaces and the loop group representation.
For more references, see, for example, \cite{bobenko1994, melkosterling, pinkall2008}.
\subsection{Lorentz surfaces and box charts}
Any pseudospherical immersion has a natural Lorentz structure induced by the second 
fundamental form.  We therefore outline a little background on Lorentz surfaces
from Weinstein \cite{weinstein}.

A Lorentz surface $(S,[h])$ is an oriented $\C^\infty$ surface $S$ equipped with a conformal 
equivalence class of indefinite metrics $[h]$.  There is naturally associated an ordered
pair of nowhere parallel null direction fields $\mathcal{X}$ and $\mathcal{Y}$.  A local 
\emph{proper null coordinate system} with respect to $[h]$ is a local coordinate 
chart $(x,y)$ such that $\partial_x$ and $\partial_y$ are parallel to $\mathcal{X}$ and 
$\mathcal{Y}$ respectively and $h=2 B \dd x \dd y$ for some positive function $B$.

The Lorentzian analogue to a holomorphic chart of a Riemann surface is a box chart.
A pair of charts $\phi = (x,y)$, $\hat \phi = (\hat x, \hat y)$, on a surface $S$ are \emph{$C^\square$-related} if the orientation, and the directions $\partial_x$ and $\partial_y$
are preserved by the transition function, that is $\hat \phi \circ \phi^{-1}(x,y) = (f(x),g(y))$
with $f^\prime g^\prime >0$.  A \emph{$C^\square$-atlas} $\mathcal{A}^\square$ is a 
subatlas of the atlas of $S$ in which all charts are $C^\square$-related. A \emph{box surface}
is an ordered pair $(S, \mathcal{A}^\square)$, consisting of a surface and a maximal 
$C^\square$-atlas, and any element of $\mathcal{A}^\square$ is called a \emph{box chart}.  

By Theorem 1 of \cite{weinstein}, box surfaces are in one-one correspondence
with Lorentz surfaces $(S, [h])$, where $[h]$ is a conformal equivalence class of Lorentz
metrics.  In particular, given a Lorentz surface $(S,h)$, the set of all proper null coordinate
charts is a maximal $C^\square$ atlas on $S$. 

A \emph{grid box} in $\real^2$ is a product of intervals $B=(a,b)\times(c,d)$ where
$-\infty \leq a < b\leq \infty$ and $-\infty \leq c < d\leq \infty$.  Since the property
of being a grid box is  preserved by the transition functions of $C^\square$-related charts,
the concept of a grid box is well defined on a Lorentz surface. We call $\phi^{-1}(B)$
a grid box on $S$ if $B$ is a grid box and $\phi$ is a box chart.

\subsection{Lorentz harmonic maps and the associated pseudospherical frontal}   \label{harmonicmapssection}
Let $(S,h)$ be a simply connected Lorentz surface. 
Suppose $N: S \to \SSS^2$ to be a smooth  map.  
Then $N$ is harmonic if and only if the mixed partial derivative
$N_{xy}$ is proportional to $N$, otherwise stated as $N \times N_{xy}=0$, where $(x,y)$
are any null coordinate system (box chart).

Consider now the system 
\beq  \label{assocfrontaleqn}
f_x = N \times N_x , \quad
f_y = - N \times N_y,
\eeq
for a map $f: S \to \real^3$.
The compatibility of the system \eqref{assocfrontaleqn}, i.e.
$\partial _y ( N \times N_x ) = \partial _x (- N \times N_y)$
 is equivalent to the equation $N \times N_{xy}=0$, i.e. to the harmonicity of $N$. 
\begin{definition}
The smooth map $f: S \to \real^3$, unique up to a translation, obtained by integrating the system 
\eqref{assocfrontaleqn}  is called the \emph{pseudospherical frontal associated to $N$}.
The map $L=(f,N): S \to \real^3 \times \SSS^2$ is called the \emph{Legendrian lift of $f$}.
\end{definition}
Clearly $\dd f$ is  orthogonal to $N$, and so $f$ is a frontal. 
At points where $f$ is an immersion, the Gauss curvature is $-1$, and the null coordinates 
are \emph{asymptotic coordinates} for $f$ (see below). Hence the name pseudospherical frontal.   

 Conversely, if $\tilde f: S \to \real^3$ is a regular constant Gauss curvature $-1$ surface, where $S$ is
simply connected, it is well known that one can find a global asymptotic coordinate system for $\tilde f$,
and that the unit normal is a harmonic map with respect to the Lorentz structure defined by the 
second fundamental form.  Hence all standard pseudospherical surfaces are obtained in the 
above manner from their Gauss maps.

\subsection{The extended frame}  \label{extendedframesect}
Let $K$ denote the diagonal subgroup of $SU(2)$, and represent $\SSS^2$ as the symmetric
space $SU(2)/K$, with projection $\pi: SU(2) \to \SSS^2$ given by $\pi(g) = \Ad_{g}e_3$,
where
\bdm
e_1=\frac{1}{2}\bbar 0 & i \\ i & 0 \ebar, \quad
e_2=\frac{1}{2}\bbar 0 & -1 \\ 1& 0 \ebar, \quad
e_3=\frac{1}{2}\bbar i & 0 \\ 0 & -i \ebar, 
\edm
are an orthonormal basis for $\mathfrak{su}(2)$, with respect to the inner product
$\langle X, Y \rangle = -2\trace(XY)$.  We have the commutators
$[e_1,e_2]=e_3$, $[e_2,e_3]=e_1$ and $[e_3,e_1]=e_2$, so that the cross-product in
$\real^3 =\mathfrak{su}(2)$ is 
$$
A \times B = [A,B].
$$
Let $N: S \to \SSS^2 = SU(2)/K$ be a harmonic map, as above,
and $F: S \to SU(2)$ any lift of $N$, i.e. a map such that 
$N=\pi(F) = \Ad_F e_3$.
We can express the Maurer-Cartan form of $F$ as
\bdm
\alpha:= F^{-1} \dd F = (U_{\mathfrak{k}} + U_\mathfrak{p}) \dd x + (V _{\mathfrak{k}} + V_{\mathfrak{p}})\dd y,
\edm
where the $\mathfrak{k}$ and $\mathfrak{p}$ components are with respect to the 
Lie algebra decomposition $\mathfrak{k} = \textup{span}\{e_3\}$,
$\mathfrak{p} = \textup{span}\{e_1,e_2\}$.  

The equations \eqref{assocfrontaleqn} for the associated pseudospherical frontal can be written
\[  
f_x = \Ad_F U_\mathfrak{p}, \quad
f_y = - \Ad_ F V_\mathfrak{p},
\]
and  $f$ is immersed precisely at the points where $U_\mathfrak{p}$ and $V_\mathfrak{p}$ are
linearly independent.  At such a point, the first and second fundamental forms are
\[
{\mathbb I} = \bbar |U_\mathfrak{p}|^2  & | U_\mathfrak{p} | |V_\mathfrak{p} | \cos \phi \\
      | U_\mathfrak{p} | |V_\mathfrak{p} | \cos \phi  &   |V_\mathfrak{p}|^2  \ebar, \quad
{\mathbb I \mathbb I} = \bbar 0 &  |U_\mathfrak{p}| |V_\mathfrak{p}| \sin \phi  \\
	       |U_\mathfrak{p}| |V_\mathfrak{p}| \sin \phi  & 0 \ebar,
	\]
	where $\phi$ is the angle from $U_\mathfrak{p}$ to $-V_\mathfrak{p}$, and $|\cdot|$ is the standard norm
in $\real^3 \equiv \mathfrak{su}(2)$.
Thus $x$ and $y$ are \emph{asymptotic coordinates} for $f$, and the Gauss curvature is $-1$.

To characterize the harmonicity of $N$ in terms of $F$, we differentiate $N=\Ad_F e_3$ to obtain
\[
\Ad_{F^{-1}}   N_{xy}  =  [U_\mathfrak{p}, [V_\mathfrak{p}, e_3]]  + \left[ \frac{\partial V_\mathfrak{p}}{\partial x} + 
		      [U_\mathfrak{k}, V_\mathfrak{p}] , e_3 \right].
\]
Hence, $N_{xy}$ is proportional to $\Ad_F e_3$ if and only if the $\mathfrak{p}$ part of the right hand side
vanishes, i.e. if and only if  
$\left[ \partial_x V_\mathfrak{p}+ [U_\mathfrak{k}, V_\mathfrak{p}] , e_3 \right] = 0$, and this holds
if and only if
\beq \label{harmoniceqn}
 \partial_x V_\mathfrak{p}+ [U_\mathfrak{k}, V_\mathfrak{p}] = 0.
\eeq
If $\alpha$ is the Maurer-Cartan form of a frame $F$ for an arbitrary smooth map $N: S \to \SSS^2$, we 
can define
\bdm
\alpha_\lambda:= (U_{\mathfrak{k}} + U_\mathfrak{p}\lambda) \dd x + (V _{\mathfrak{k}} + V_{\mathfrak{p}}\lambda^{-1})\dd y,
\edm
where the parameter $\lambda$ takes values in $\C^* :=\C \setminus \{0\}$.
The basis of the loop group setup is that  the Maurer-Cartan equation
\beq \label{mceqn}
\dd \alpha_\lambda + \alpha_\lambda \wedge \alpha_\lambda=0, 
\eeq
is satisfied for all $\lambda$ if and only if Equation \eqref{harmoniceqn} holds, if and only if
$N$ is harmonic.

Fix some point $p \in S$ with  $F(p)=F_0$.  We want to retain the \emph{twisted} structure
	that $\alpha_\lambda$ already has, namely that diagonal and off-diagonal matrix components are 
	respectively even and odd functions of $\lambda$.  We therefore set
\bdm
F_0^\lambda := \bbar a & \lambda b\\ -\bar b\lambda^{-1} & \bar a\ebar, \quad \hbox{where} \quad
 F_0 = \bbar a & b\\ -\bar b& \bar a\ebar.
\edm
 Give that $N$ is harmonic, the Maurer-Cartan equation (\ref{mceqn}) means that,
for any value of $\lambda$, we can solve the equations
\bdm
(F^\lambda)^{-1} \dd F^\lambda = \alpha_\lambda,  \quad F^\lambda(p) =  F^\lambda_0,
\edm
to obtain a family of maps $F^\lambda: S \to SL(2,\C)$, which take values in
$SU(2)$ for real values of $\lambda$, and we have an associated family 
$N^\lambda: S \to \SSS^2$ of harmonic maps given by
\bdm
N^\lambda:= \Ad_{F^\lambda} e_3, \quad \textup{for } \lambda \in \real^*.
\edm
Given a fixed basepoint $p$, the family $N^\lambda$ is independent of the choice of lift $F$ of $N$.
Any other lift is of the form $\tilde F = F D$ where $D$ is a diagonal matrix valued function,
and the extended frame works out to be $\tilde F^\lambda = F^\lambda D$, leaving
 $N^\lambda = \Ad_{F^\lambda}e_3$ unchanged.
Let us call the 
family $N^\lambda$ the \emph{extended harmonic map}, or \emph{the extended unit normal}, and
$F^\lambda$ an extended frame. 
There is a convenient way to obtain the associated pseudospherical  frontal $f$ from  $F^\lambda$.
The \emph{Sym formula} is defined as:
\beq \label{sym}
\mathcal{S}_\lambda (N^\lambda) := \lambda \frac{\partial F^\lambda}{\partial \lambda} (F^\lambda)^{-1}.
\eeq
This formula is independent of the choice of extended frame $F^\lambda$, (given a fixed basepoint), and
hence well defined on $N^\lambda$.   By computing the derivatives one verifies:
\begin{lemma}  \label{pssurfacelemma}
For each $\lambda \in \real^*$, the map
$f^\lambda: S \to \real^3 = \mathfrak{su}(2)$, given by the \emph{Sym} formula:
$f^\lambda = \mathcal{S}_\lambda (N^\lambda)$
is, (up to a translation) the unique pseudospherical frontal associated to the harmonic map $N^\lambda$.
\end{lemma}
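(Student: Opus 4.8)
The plan is to verify the \emph{Sym} formula by direct differentiation, reducing the claim to the already-established equations \eqref{assocfrontaleqn} for the pseudospherical frontal associated to $N^\lambda$. Since the associated frontal is unique up to translation (and Lemma \ref{pssurfacelemma} only asserts equality up to translation), it suffices to show that $f^\lambda := \mathcal{S}_\lambda(N^\lambda) = \lambda \,\partial_\lambda F^\lambda \,(F^\lambda)^{-1}$ satisfies the pair of equations $f^\lambda_x = N^\lambda \times N^\lambda_x$ and $f^\lambda_y = -N^\lambda \times N^\lambda_y$, where $N^\lambda = \Ad_{F^\lambda} e_3$ and the cross product is the bracket. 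The key structural fact I would exploit is that mixed partials commute, so $\partial_x \partial_\lambda F^\lambda = \partial_\lambda \partial_x F^\lambda$, and that $\partial_x F^\lambda = F^\lambda (U_{\mathfrak{k}} + U_{\mathfrak{p}}\lambda)$ from the definition $(F^\lambda)^{-1}\dd F^\lambda = \alpha_\lambda$.

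First I would compute $\partial_x f^\lambda$. Writing $f^\lambda = \lambda (\partial_\lambda F^\lambda)(F^\lambda)^{-1}$ and differentiating in $x$, I use the product rule together with $\partial_x (F^\lambda)^{-1} = -(F^\lambda)^{-1}(\partial_x F^\lambda)(F^\lambda)^{-1}$. Substituting $\partial_x F^\lambda = F^\lambda \alpha_\lambda(\partial_x)$ and interchanging the order of the $x$ and $\lambda$ derivatives in the term $\partial_\lambda \partial_x F^\lambda$, the computation should collapse into a conjugation by $F^\lambda$ of an expression built from $\alpha_\lambda(\partial_x) = U_{\mathfrak{k}} + U_{\mathfrak{p}}\lambda$ and its $\lambda$-derivative $\partial_\lambda \alpha_\lambda(\partial_x) = U_{\mathfrak{p}}$. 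After cancellation I expect to obtain $\partial_x f^\lambda = \Ad_{F^\lambda}(\lambda U_{\mathfrak{p}})$. The analogous computation in $y$, using $\alpha_\lambda(\partial_y) = V_{\mathfrak{k}} + V_{\mathfrak{p}}\lambda^{-1}$ and hence $\partial_\lambda \alpha_\lambda(\partial_y) = -\lambda^{-2} V_{\mathfrak{p}}$, should give $\partial_y f^\lambda = -\Ad_{F^\lambda}(\lambda^{-1} V_{\mathfrak{p}})$.

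It then remains to match these against the frontal equations for $N^\lambda$. Since $N^\lambda = \Ad_{F^\lambda} e_3$, differentiating gives $N^\lambda_x = \Ad_{F^\lambda}[\,\alpha_\lambda(\partial_x), e_3\,] = \Ad_{F^\lambda}[U_{\mathfrak{p}}\lambda, e_3]$, because $[e_3,e_3]=0$ kills the $\mathfrak{k}$-part. Using that $\Ad_{F^\lambda}$ is a Lie algebra automorphism and the bracket-is-cross-product identity $A \times B = [A,B]$, I then compute $N^\lambda \times N^\lambda_x = \Ad_{F^\lambda}\big[e_3, [\lambda U_{\mathfrak{p}}, e_3]\big]$. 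The double bracket $[e_3,[\lambda U_{\mathfrak{p}}, e_3]]$ should simplify, via the relations $[e_3,e_1]=e_2$, $[e_3,e_2]=-e_1$ and the fact that $U_{\mathfrak{p}} \in \mathfrak{p} = \mathrm{span}\{e_1,e_2\}$, to exactly $\lambda U_{\mathfrak{p}}$ (the double adjoint action of $e_3$ acts as $-$identity on the rotation generators but two applications with the sign bookkeeping returns $\lambda U_{\mathfrak{p}}$), matching $\partial_x f^\lambda$. The $y$-equation follows identically with the sign from the second equation in \eqref{assocfrontaleqn} absorbing the extra minus.

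The main obstacle I anticipate is purely bookkeeping rather than conceptual: keeping the $\lambda$-powers and signs straight through the interchange of $x$- and $\lambda$-differentiation, and correctly evaluating the iterated brackets $[e_3,[\,\cdot\,,e_3]]$ on $\mathfrak{p}$ to confirm they act as the identity. One subtle point worth checking is that the twisting convention built into $F_0^\lambda$ and $\alpha_\lambda$ (diagonal even, off-diagonal odd in $\lambda$) is consistent, so that $f^\lambda$ indeed lands in $\mathfrak{su}(2) = \real^3$ for $\lambda \in \real^*$ rather than merely in $\slC$; this is guaranteed because $F^\lambda \in SU(2)$ for real $\lambda$ and $\lambda \partial_\lambda F^\lambda (F^\lambda)^{-1}$ is then skew-Hermitian and trace-free. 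Independence of the choice of extended frame has already been asserted in the text, so I need only verify the formula for one choice of lift.
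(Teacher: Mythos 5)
Your proposal is correct and is precisely the computation the paper alludes to with its one-line justification (``By computing the derivatives one verifies''): differentiating the Sym formula, using $\partial_x F^\lambda = F^\lambda(U_\mathfrak{k}+U_\mathfrak{p}\lambda)$ and commuting $\partial_x$ with $\partial_\lambda$, yields $f^\lambda_x=\Ad_{F^\lambda}(\lambda U_\mathfrak{p})$ and $f^\lambda_y=-\Ad_{F^\lambda}(\lambda^{-1}V_\mathfrak{p})$, which match $N^\lambda\times N^\lambda_x$ and $-N^\lambda\times N^\lambda_y$ via the identity $[e_3,[X,e_3]]=X$ on $\mathfrak{p}$. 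All the sign and $\lambda$-power bookkeeping you flag does work out as you expect, so this is essentially the paper's own argument, fully written out.
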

The Sym formula was given by A. Sym \cite{sym1985}. A geometric explanation of this formula can be found in \cite{bik}.

Finally, we remark that the choice of basepoint in the construction of the extended harmonic map $N^\lambda$
has no geometric significance.  Choosing a different basepoint will result in a translation of the surface obtained from
the formula $f= \mathcal{S}_1(N^\lambda)$, and this is the same freedom we have in the definition of the 
associated pseudospherical frontal.

\section{Singularities of pseudospherical frontals} \label{singsection}
For notational convenience,
 we now use $\hat X$ instead of $X^\lambda$ to denote a family
of objects parametrized by $\lambda$. For such an object, we also write $X$ for
$\hat X |_{\lambda =1}$.

Analysis of singularities is local, and so, in this section, we are generally discussing
a harmonic map $N:R \to \SSS^2$, where  $R$ is a grid box $I_x \times I_y \subset \real^2$, a product of open intervals. 
A harmonic map $N$ is called \emph{weakly regular} if
the kernel of $\dd N$ is everywhere of dimension at most 1, and never contains a non-zero null vector.
\begin{definition}  \label{admissibledef}
 An \emph{admissible connection} is an integrable family of $1$-forms
$$
\hat \alpha:= (U_{\mathfrak{k}} + U_\mathfrak{p}\lambda) \dd x + (V _{\mathfrak{k}} + V_{\mathfrak{p}}\lambda^{-1})\dd y,
$$
on $R:=I_x \times I_y$,
where $U_{\mathfrak{k}}$, $V_{\mathfrak{k}}$ and  $U_\mathfrak{p}$, $V_\mathfrak{p}$ 
take values respectively in $\mathfrak{k}$ and $\mathfrak{p}$ in $\mathfrak{su}(2)$.
The connection is \emph{weakly regular} at $p \in R$,
 if both $U_\mathfrak{p}$ and $V_\mathfrak{p}$ 
are non-zero at $p$, and \emph{regular} if $U_\mathfrak{p}$ are $V_\mathfrak{p}$ 
are linearly independent at $p$.  The connection is weakly regular or regular if these
conditions hold on the whole of $R$.  An \emph{admissible frame} is a family of maps
$\hat F: R \to SU(2)$ such that $\hat F^{-1} \dd \hat F$ is an admissible connection.
\end{definition}
The problem of constructing harmonic maps $R \to \SSS^2$ is essentially equivalent to that of finding admissible connections. The only freedom in the choice of admissible
frame $\hat F$ is a gauge $\hat F \mapsto \hat F D$, where $D$ takes values in the diagonal subgroup $K \subset SU(2)$.
Equivalently, $\hat \alpha \mapsto  D^{-1} \hat \alpha D + D^{-1} \dd D$.
The harmonic map $N = \Ad_F e_3$ is (weakly) regular if and only if the admissible connection is.

\begin{lemma}  \label{frontlemma}
Let $\hat F$ be an admissible frame, with associated harmonic map $N= \Ad_F e_3$ and 
$f=\mathcal{S}_1(\hat F)$.
 The connection $\hat \alpha := \hat F^{-1} \dd \hat F$ is weakly regular if and only if $f$ is a wave front.
\end{lemma}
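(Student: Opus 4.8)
The plan is to unwind the definition of a wave front: by definition $f$ is a wave front precisely when its Legendrian lift $L = (f,N)\colon R \to \real^3 \times \SSS^2$ is an immersion, i.e. when $L_x$ and $L_y$ are linearly independent at every point. Since weak regularity of $\hat\alpha$ is likewise a pointwise condition, I would fix an arbitrary $p \in R$ and show that $L_x, L_y$ are linearly independent at $p$ if and only if $U_\mathfrak{p}(p) \neq 0$ and $V_\mathfrak{p}(p) \neq 0$.

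First I would assemble the two tangent vectors. From the frame formulas we already have $f_x = \Ad_F U_\mathfrak{p}$ and $f_y = -\Ad_F V_\mathfrak{p}$. Differentiating $N = \Ad_F e_3$ and using that $[U_\mathfrak{k}, e_3] = [V_\mathfrak{k}, e_3] = 0$ (since $U_\mathfrak{k}, V_\mathfrak{k}$ and $e_3$ all lie in $\mathfrak{k}$), I get $N_x = \Ad_F[U_\mathfrak{p}, e_3]$ and $N_y = \Ad_F[V_\mathfrak{p}, e_3]$. Thus, factoring out the linear isomorphism $\Ad_F$, the pair $(L_x, L_y)$ has the same rank as the pair
\[
(U_\mathfrak{p},\ [U_\mathfrak{p}, e_3]), \qquad (-V_\mathfrak{p},\ [V_\mathfrak{p}, e_3])
\]
in $\mathfrak{p} \oplus \mathfrak{p}$. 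The one structural fact I need is that the map $X \mapsto [X, e_3]$ is a linear automorphism of $\mathfrak{p}$ (it sends $e_1 \mapsto -e_2$ and $e_2 \mapsto e_1$), so $[X, e_3] = 0$ for $X \in \mathfrak{p}$ forces $X = 0$.

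For the forward direction I would argue by contrapositive: if $U_\mathfrak{p}(p) = 0$ then both $f_x$ and $N_x$ vanish, so $L_x = 0$; symmetrically $V_\mathfrak{p}(p) = 0$ gives $L_y = 0$. In either case $L$ fails to be an immersion at $p$. For the converse, assume both $U_\mathfrak{p}, V_\mathfrak{p}$ are nonzero at $p$ and suppose $a L_x + b L_y = 0$. Comparing the $\real^3$ (first) components and cancelling $\Ad_F$ gives $a U_\mathfrak{p} - b V_\mathfrak{p} = 0$, while comparing the $\SSS^2$ (second) components, cancelling $\Ad_F$, and using that $[\cdot, e_3]$ is injective on $\mathfrak{p}$ gives $a U_\mathfrak{p} + b V_\mathfrak{p} = 0$. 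Adding and subtracting these two relations yields $a U_\mathfrak{p} = 0$ and $b V_\mathfrak{p} = 0$, whence $a = b = 0$. Therefore $L_x, L_y$ are independent at $p$ and, as $p$ was arbitrary, $f$ is a wave front.

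The computation is elementary, so there is no serious analytic obstacle; the point worth flagging is conceptual rather than technical. Regularity of $f$ alone requires $U_\mathfrak{p}$ and $V_\mathfrak{p}$ to be \emph{linearly independent}, whereas the front condition needs only that each is individually \emph{nonzero}. The mechanism that produces this weaker requirement is the opposite sign in $f_y = -\Ad_F V_\mathfrak{p}$ versus $N_y = +\Ad_F[V_\mathfrak{p}, e_3]$: it is exactly this sign discrepancy that decouples the two linear relations above and lets nonvanishing of $U_\mathfrak{p}, V_\mathfrak{p}$ suffice. I would take care to highlight this, since it is really the content of the lemma, namely the precise distinction between weak regularity and regularity.
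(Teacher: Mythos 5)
Your proof is correct and follows essentially the same route as the paper: both directions come down to computing $\dd f$ and $\dd N$ in the moving frame and observing that nonvanishing of $U_\mathfrak{p}$ and $V_\mathfrak{p}$ forces $\dd L$ to have rank $2$. The only cosmetic difference is that the paper first gauges the frame so that $U_\mathfrak{p}=Ae_1$, $V_\mathfrak{p}=-B(\cos\phi\, e_1+\sin\phi\, e_2)$ and reads off the rank from the explicit components, whereas you argue invariantly via the injectivity of $\ad_{e_3}$ on $\mathfrak{p}$ and the sign discrepancy between $f_y$ and $N_y$ --- the same mechanism, packaged a little more cleanly.
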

\begin{proof}
We have
\[
\Ad_{F^{-1}} \dd f=  U_{\mathfrak{p}} \dd x - V_{\mathfrak{p}} \dd y.
\]
If $\hat \alpha$ is not weakly regular then at least one of $U_{\mathfrak{p}}$
and $V_{\mathfrak{p}} $ is zero at some point.
 Since the derivatives $\dd N$ and $\dd f$ are
computed in terms of these, the rank of $\dd L=(\dd f, \dd N)$ is at most $1$
at this point and
$f$ is not a wave front.

Now suppose that $\hat \alpha$ is weakly regular. We need to show that
$\dd L = (\dd f, \dd N)$ has rank $2$.
Define $W: R \to \SSS^1 \subset \mathfrak{p}$ by
 $W= U_\mathfrak{p}/|U_\mathfrak{p}|$.   We can write
\bdm
U_\mathfrak{p} = A W, \quad \quad V_\mathfrak{p} = -B R_\phi W,
\edm
where $A$ and $B$ are smooth positive real valued functions, $\phi$ is
smooth and real-valued, and $R_\phi$ denotes the rotation of angle $\phi$ in 
the $e_1 e_2$ plane.  The connection is regular when $\phi$ is not an integer 
multiple of $\pi$. 
Writing $W=R_\gamma e_1$, let us multiply the extended frame $\hat F$ on
the right by $D=\textup{diag}(e^{i\gamma/2}, e^{-i \gamma/2})$. This has
no effect on the harmonic map $N=\Ad_F e_3$ or the map $f=\mathcal{S}_1(\hat F)$.
Thus it is equivalent to consider the admissible connection 
$D^{-1} \hat \alpha D + D^{-1} \dd D$, which we now denote by $\hat \alpha$.
The conclusion is that we can assume that 
$$
U_\mathfrak{p}= A e_1, \quad V_\mathfrak{p} = -B (\cos \phi e_1 + \sin \phi e_2).
$$
Now 
\beqas
\Ad_{F^{-1}} \dd N &=& [A e_1 \dd x - B (\cos \phi e_1 + \sin \phi e_2) \dd y \, , \, e_3]\\
  &=& -B \sin \phi \dd y \, e_1 +(-A \dd x + B \cos \phi \dd y) \, e_2,
\eeqas
and 
\bdm
\Ad_{F^{-1}} \dd f = (A \dd x + B \cos \phi \dd y) e_1 + B \sin \phi \dd y \, e_2.
\edm
Since $A$ and $B$ are non-vanishing, it follows that $\dd L = (\dd f, \dd N)$ has rank $2$ and $f$ is a wave front.
\end{proof}

\subsection{The singular curve for pseudospherical wave fronts}  \label{noncharsingularities}
Assume that $\hat \alpha$, $N$ and $f$ are as above, and $\hat \alpha$ is
weakly regular. Using the same choices as in the previous lemma, we have
\beq \label{framedef}
f_x = A \Ad_F  e_1, \quad f_y = B \Ad_F (\cos \phi e_1 + \sin \phi e_2), \quad N = \Ad_F (e_1 \times e_2).
\eeq
Thus 
\beq
f_x \times f_y = A B  \sin \phi N.
\eeq
Since $A$ and $B$ are assumed non-vanishing, the singular set is the set of points
$\sin \phi = 0$, i.e. $\phi = k \pi$, for $k \in {\mathbb Z}$.
A singular point $q$ on a frontal is non-degenerate if and only if one can write
$f_x \times f_y = \mu N$, where $\mu(q)=0$ and $\dd \mu|_q \neq 0$. 
Here we have $\mu = A B \sin \phi$ and $\dd \mu = \pm A B\dd \phi$. Thus the
non-degeneracy condition in our case is
\beq \label{nondegeneracy}
\dd \phi \neq 0.
\eeq
In a neighbourhood of a non-degenerate singular point, the singular set is a 
regular curve in the coordinate domain, and  there is a well-defined $1$-dimensional direction
field $\eta$ along the curve called the \emph{null direction} (not to be 
confused with null coordinate directions!) such that 
$$
\dd f (\eta)=0.
$$
The generic singularities of pseudospherical surfaces were studied by Ishikawa and Machida \cite{ishimach},
 and shown to be cuspidal edges and swallowtails.  For general wave fronts, these singularities can
be identified by the following characterization:
\begin{proposition} \cite{krsuy}
Let $f$ be a wave front and $q$ a non-degenerate singular point. Let
$\sigma(t)$ be a local parametrization for the singular curve around $q$,
with $\sigma(0)=q$.  Then the image of $f$ in a neighbourhood of $q$ is diffeomorphic to:
\begin{enumerate}
\item A cuspidal
edge if and only if $\eta(0)$ is not proportional to $\sigma^\prime(0)$;
\item 
A swallowtail if and only if $\eta(0)$ is proportional to $\sigma^\prime(0)$, and
\bdm
\frac{\dd}{\dd t}(\det(\sigma^\prime(t), \eta(t))\big|_{t=0} \neq 0.
\edm
\end{enumerate}
\end{proposition}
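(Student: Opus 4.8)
The statement is the cuspidal-edge and swallowtail recognition theorem for general wave fronts, so the plan is to reduce the problem, by a local diffeomorphism in both the source and the target, to the two standard normal forms $(u,v)\mapsto(u,v^2,v^3)$ (the cuspidal edge) and $(u,v)\mapsto(u,3v^4+uv^2,4v^3+2uv)$ (the swallowtail), and then to check that the two stated conditions on $\sigma'$ and $\eta$ are exactly the recognition conditions that distinguish these two models. Since $f$ is a wave front, its Legendrian lift $L=(f,\nu)$, where $\nu$ is a local unit normal, is an immersion, so we are squarely in the setting of Legendrian/front singularity theory, and both the hypotheses and the two candidate conclusions are invariant under the relevant equivalence.

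First I would replace the geometric conditions by conditions on a single scalar \emph{density function}. Choosing a local unit normal $\nu$, set $\lambda=\det(f_u,f_v,\nu)$, so that the singular set is $\lambda^{-1}(0)$ and the non-degeneracy of $q$ is exactly $\dd\lambda|_q\neq 0$ (in the pseudospherical case of the previous subsection this is the function $\mu=AB\sin\phi$, with $\dd\mu=\pm AB\,\dd\phi$). Since $\sigma$ parametrises $\lambda^{-1}(0)$ one has $\dd\lambda(\sigma')\equiv 0$, and a direct computation shows that $\det(\sigma'(t),\eta(t))$ equals $(\eta\lambda)\circ\sigma(t)$ up to a nowhere-zero factor. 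Hence ``$\eta(0)$ not proportional to $\sigma'(0)$'' is equivalent to $\eta\lambda(q)\neq 0$, while the swallowtail hypotheses, $\eta(0)\parallel\sigma'(0)$ together with $\tfrac{\dd}{\dd t}\det(\sigma'(t),\eta(t))|_{t=0}\neq 0$, become $\eta\lambda(q)=0$ and $\eta(\eta\lambda)(q)\neq 0$.

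With these reformulations in hand I would build adapted coordinates: take $u$ running along the singular curve and choose $v$ so that $\partial_v=\eta$ near $q$. In the first case $\eta$ is transverse to the singular curve, so $(u,v)$ are honest coordinates, and a Taylor expansion combined with the Malgrange preparation theorem brings $f$ to the cuspidal-edge form; the condition $\eta\lambda\neq 0$ is precisely what makes the $v$-dependence nondegenerate to the required order. In the second case $\eta$ is tangent to the singular curve at $q$, the coordinate construction degenerates to first order, and the extra condition $\eta(\eta\lambda)\neq 0$ is exactly what forces the next-order term and yields the swallowtail. The main obstacle is this final $\mathcal{A}$-equivalence step, since establishing it rigorously requires either a careful application of the preparation/division theorem or, more cleanly, an appeal to the Arnold--Zakalyukin classification of Legendrian singularities: the lift $L=(f,\nu)$ is a Legendrian immersion into the unit cotangent bundle of $\real^3$, whose only singularities in this dimension are of type $A_1$ and $A_2$, and one then only has to verify that the two displayed conditions are the recognition conditions separating $A_1$ from $A_2$. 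This reduction to the classification, which is the content of \cite{krsuy}, is the step I would expect to carry the real weight of the proof.
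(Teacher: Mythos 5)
The paper gives no proof of this proposition: it is quoted verbatim from \cite{krsuy}, so there is nothing internal to compare against. Your sketch is a correct outline of the argument actually given in that reference (and in Saji--Umehara--Yamada's criteria for fronts). In particular the reduction to the signed density $\lambda=\det(f_u,f_v,\nu)$ is right: since $\lambda\circ\sigma\equiv 0$ and $\dd\lambda\neq 0$ at a non-degenerate point, one has $\dd\lambda=c\,(-\sigma_2'\,\dd u+\sigma_1'\,\dd v)$ with $c\neq 0$, whence $\eta\lambda=c\det(\sigma',\eta)$, and the two stated conditions become $\eta\lambda(q)\neq 0$, respectively $\eta\lambda(q)=0$ and $\eta\eta\lambda(q)\neq 0$; the normal-form step is then carried by the Malgrange preparation/division theorem. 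One caution on your proposed ``cleaner'' alternative: the Arnold--Zakalyukin classification describes \emph{generic} Legendrian immersions, so for a specific front one cannot invoke it to conclude that only $A_2$ (cuspidal edge) and $A_3$ (swallowtail) points occur --- that is precisely what the recognition criteria are needed to establish, so this route is circular as stated (and note the standard labelling is $A_2$/$A_3$, not $A_1$/$A_2$). The preparation-theorem route, which you list first, is the one that works and is the content of \cite{krsuy}; since the paper itself defers entirely to that citation, your level of detail is appropriate.
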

In our situation, assuming, for concreteness' sake that the singular curve is given
locally by $\phi(x,y)=0$, we have $\dd f = (A \dd x + B \dd y)\Ad_F e_1$, and so
the null direction is given on this curve by
\bdm
\eta = B \partial_x - A \partial_y.
\edm
Assume first that the singular curve is not tangent to either $\partial_x$ or
$\partial_y$. In that case we can, after a change of box coordinates (see, e.g., \cite{dbms1}),
  assume that our singular curve is locally given by $y=\varepsilon x$, where $\varepsilon=\pm1$.
	Note that this special choice of coordinates means that we  cannot assume that $A$ and $B$ are
	constant.    Now we have, in the basis
$\partial_x$, $\partial_y$,
\beqas
\eta(t) = (B(t), -A(t)), \quad  \sigma^\prime(t)=(1,\varepsilon),
\quad \frac{\dd}{\dd t}\det(\sigma^\prime(t), \eta(t))= A^\prime(t)+\varepsilon B^\prime(t).
\eeqas
Let us add here that the special case that $A(t)+\varepsilon B(t)\equiv 0$
corresponds to a \emph{cone singularity}, i.e. a non-degenerate singular curve that maps to
a single point. This follows from the formula $\dd f(\sigma^\prime(t)) = (A(t)+\varepsilon B(t))\Ad_F e_1$.
Constructing pseudospherical wave fronts with cone singularities is discussed by Pinkall \cite{pinkall2008}.

Now consider the case that the singular curve is tangent, at a point $p$, to one of the coordinate
directions $\partial_x$ or $\partial_y$.  Then it is not proportional to $\eta$, because
both $B$ and $A$ are non-zero. In this case, by the proposition above,
 the surface is a cuspidal edge at $p$.
We summarize this as:
\begin{theorem}   \label{genericsingthm}
Let $f$ be a pseudospherical wave front.
Suppose that $q$ is a non-degenerate singular point. If the singular curve is
tangent at $q$ to a null coordinate direction then the surface is locally diffeomorphic to a
cuspidal edge at $q$.   Otherwise, there exist box coordinates 
$(x,y)$ such that, in a neighbourhood of $q=(0,0)$, the singular
set is  parametrized by $(x(t),y(t))= (t, \varepsilon t)$, and the image of 
$f$ is diffeomorphic to:
\begin{enumerate}
\item A cuspidal edge if $A(0) + \varepsilon B(0) \neq 0$;
\item A swallowtail if $A(0) + \varepsilon B(0) =0$ and 
$A^\prime (0)+\varepsilon  B^\prime (0) \neq 0$.
\item A cone singularity if  $A(t) + \varepsilon B(t) \equiv 0$,
\end{enumerate}
where $A(t)=|f_x(t, \varepsilon t)|$ and $B(t)=|f_y(t, \varepsilon t)|$.
\end{theorem}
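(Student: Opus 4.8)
The plan is to deduce the entire statement from the characterization Proposition of \cite{krsuy} together with the explicit computations already assembled in Section \ref{noncharsingularities}. The two ingredients I would carry over are: weak regularity of $\hat\alpha$ forces $A=|f_x|$ and $B=|f_y|$ to be strictly positive along the singular curve $\{\phi=0\}$; and the null direction there is $\eta = B\partial_x - A\partial_y$. With these in hand, the whole theorem reduces to deciding, in suitable box coordinates, when $\eta$ is proportional to the tangent $\sigma'$ of the singular curve, and, in the proportional case, whether the relevant determinant has nonvanishing derivative.

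For the first assertion (tangency to a null coordinate direction) I would argue directly, without needing a normal form. If $\sigma'(0)$ is parallel to $\partial_x$ or to $\partial_y$, then since $\eta=B\partial_x-A\partial_y$ has \emph{both} coefficients nonzero (by weak regularity), $\eta(0)$ cannot be proportional to $\sigma'(0)$. Part (1) of the Proposition then identifies the singularity as a cuspidal edge, with no further hypotheses required.

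For the transverse case I would first invoke the change of box coordinates (as in \cite{dbms1}) bringing the singular curve to the normal form $y=\varepsilon x$, so that $\sigma'(t)=(1,\varepsilon)$ and $\eta(t)=(B(t),-A(t))$; I would note, as in the preceding discussion, that in these coordinates $A$ and $B$ need no longer be constant. A one-line determinant computation then reproduces the quantities already recorded, namely $\det(\sigma'(t),\eta(t))=\pm(A(t)+\varepsilon B(t))$ and $\frac{\dd}{\dd t}\det(\sigma'(t),\eta(t))=A'(t)+\varepsilon B'(t)$. Item (1) follows because $A(0)+\varepsilon B(0)\neq 0$ is exactly the condition that $\eta(0)$ is not proportional to $\sigma'(0)$, giving a cuspidal edge by part (1) of the Proposition. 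Item (2) is precisely the swallowtail clause of part (2): the equation $A(0)+\varepsilon B(0)=0$ says $\eta(0)\parallel\sigma'(0)$, while $A'(0)+\varepsilon B'(0)\neq 0$ is the required transversality of the determinant.

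The one point that is not a mechanical application of \cite{krsuy}, and which I expect to be the main thing needing separate care, is the cone singularity (item (3)). Here $A(t)+\varepsilon B(t)\equiv 0$ makes $\eta$ proportional to $\sigma'$ identically and the determinant vanish to all orders, so neither clause of the Proposition applies. Instead I would use the identity $\dd f(\sigma'(t))=(A(t)+\varepsilon B(t))\Ad_F e_1$ recorded above: the hypothesis forces $\dd f(\sigma'(t))\equiv 0$, hence $f\circ\sigma$ is constant and the singular curve collapses to a single point of $\real^3$, which is the defining property of a cone singularity. I would also remark that the admissibility of the coordinate change---that it preserves the box structure and hence the formula $\eta=B\partial_x-A\partial_y$---is what legitimizes using the normal form $y=\varepsilon x$ without loss of generality.
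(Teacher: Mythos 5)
Your proposal is correct and follows essentially the same route as the paper: the theorem is deduced from the Kokubu--Rossman--Saji--Umehara--Yamada criterion using the computed null direction $\eta = B\partial_x - A\partial_y$, the tangency case handled by noting both coefficients are nonzero, the transverse case via the normal form $y=\varepsilon x$ and the determinant derivative $A'+\varepsilon B'$, and the cone case via $\dd f(\sigma'(t)) = (A(t)+\varepsilon B(t))\Ad_F e_1$. All of these steps appear in the discussion preceding the theorem in the paper, of which the theorem is presented as a summary.
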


\subsection{Singular curves that are not wave fronts}  \label{charsingsection}
Let us now consider the case that $\hat \alpha$ is \emph{semi-regular} -- meaning
that the derivative of the associated harmonic map $N$ has rank at least $1$ --
but not weakly regular.
This means that at least one of $U_\mathfrak{p}$ and $V_\mathfrak{p}$ is non-zero,
but the other may vanish.  We assume then that  $U_\mathfrak{p} \neq 0$,
the other case being analogous.  We can, as before, assume that
 $U_\mathfrak{p}=A e_1$. After a change of box coordinates, we can take $A=1$.
The angle $\phi$ is not well defined at points where $V_\mathfrak{p}$ vanishes,
so we now have: 
\beqas
U_\mathfrak{k} = u_0 e_3, \quad U_\mathfrak{p}=  e_1,\\
V_\mathfrak{k} = v_0 e_3, \quad V_\mathfrak{p} = ae_1 + be_2,
\eeqas
where $u_0$, $v_0$, $a$ and $b$ are real-valued functions. The integrability
condition $\dd \hat \alpha + \hat \alpha \wedge \hat \alpha=0$ is equivalent to the following set of equations
\bdm
\frac{\partial u_0}{\partial y}= b, \quad \frac{\partial a }{\partial x}=u_0 b, \quad
\frac{\partial b }{\partial x}=-u_0 a, \quad v_0=0.
\edm
Now we have
\bdm
f_x =  \Ad_F e_1, \quad f_y = \Ad_F(ae_1 + be_2), \quad
 f_x \times f_y =  b N.
\edm
Thus the frontal $f$ has a singular point precisely when $b$ vanishes,
i.e. the singular set is given by
$$
b=0,
$$
 and the non-degeneracy condition is $\dd b \neq 0$.
If $a$ is non-vanishing then we are at a weakly regular point, already discussed.
 We therefore consider now a point $q$ at which
\bdm
a(q)=0, \quad b(q)=0, \quad \dd b |_q \neq 0.
\edm
We relabel coordinates so that $q=(0,0)$. 
The integrability conditions above for $a$ and $b$ give, along the line $y=0$,
the system:
\bdm
\quad \frac{\partial a(x,0) }{\partial x}=u_0(x,0)  b(x,0), \quad
\frac{\partial b(x,0) }{\partial x}=-u_0(x,0) a(x,0), \quad a(0,0)=b(0,0)=0,
\edm
which has the unique local solution 
$$
a(x,0)=b(x,0)=0.
$$
Hence, assuming the non-degeneracy condition, which is now 
\[
\partial_y b|_{(x,0)} \neq 0,
\]
 the singular curve is locally given by 
$$
y=0.
$$
The other integrability condition becomes $\partial_y u_0=0$ along 
$y=0$.
The null direction is $\eta = \partial_y$, which is transverse
to the singular curve, but the singularity is not a standard cuspidal edge
because the surface is not a wave front along this curve.  We  call such a singularity a
\emph{higher order cuspidal edge}, because it is non-degenerate and the image of
the singular curve is a regular curve in $\real^3$. A \emph{fold} singularity is of this type.

We have shown that if a pseudospherical surface has a non-degenerate singularity at a point
where the surface is not a wave front, then the singular curve at that point is a
\emph{characteristic curve}, or null coordinate curve.  However, we saw in the previous
section that it is also possible for a weakly regular singular curve to be tangent to 
a characteristic direction.

\section{Prescribed non-characteristic singular curves}
\subsection{The generalized d'Alembert method}   \label{gdamethod}
A well known method for producing essentially all admissible frames
is the \emph{generalized d'Alembert representation} given by M. Toda in \cite{todaagag}.
Here is a summary, using definitions and notation as in \cite{dbms1}:
let $\mathcal{G}:= \Lambda SL(2,\C)_{\sigma \rho}$ denote the group of smooth maps $\gamma: \SSS^1 \to SL(2,\C)$, 
that are fixed by the involutions $\sigma$ and $\rho$ given by 
\[
(\sigma \gamma)(\lambda)= \Ad _P \gamma(-\lambda),
\quad (\rho \gamma)(\lambda) = (\overline{\gamma(\bar \lambda)}^t)^{-1}.
\]
where $P=\hbox{diag}(-1,1)$, and $\lambda$ is the $\SSS^1$ parameter. All loops considered here extend holomorphically to
$\C \setminus \{0\}$, and the reality condition given by $\rho$ means that
they take values in $SU(2)$ for 
real values of the loop parameter $\lambda$. We also consider the subgroups $\mathcal{G}^\pm$
consisting of loops the Fourier expansions of which are power series in $\lambda^{\pm 1}$.
We denote the corresponding Lie algebras by $\hbox{Lie}(\mathcal{G})$, $\hbox{Lie}(\mathcal{G}^\pm)$.
\begin{definition}
Let $I_x$ and $I_y$ be two real intervals, 
with coordinates $x$ and $y$, respectively. 
A potential pair $(\hat \chi, \hat \psi)$ 
is a pair of smooth $\hbox{Lie}(\mathcal{G})$-valued 
$1$-forms on $I_x$ and $I_y$ respectively with Fourier expansions in $\lambda$ 
as follows{\rm:}
\[
\hat \chi=\sum_{j=-\infty}^{1}\chi_{j}\lambda^{j}\>\mathrm{d}x,
\quad
\hat \psi=\sum_{j=-1}^{\infty}\psi_{j}\lambda^{j}\>\mathrm{d}y.
\]
\end{definition}
We will call the potential pair  {\em semi-regular} at a point $p$ if at least one of
the ``leading coefficients'' $\chi_1$ and $\psi_{-1}$ is non-zero at $p$, and \emph{regular}
if both are non-zero, and the potential pair is called (semi-)regular if the condition
holds at every point.

An admissible frame $\hat F$ is then obtained by solving
 $\hat X^{-1} \dd \hat X = \hat \chi$, and  $\hat Y^{-1} \dd \hat Y = \hat \psi$
for $\hat X(x)$ and $\hat Y(y)$, each  with 
initial condition the identity matrix, thereafter performing, at each $(x,y)$, a 
\textit{Birkhoff decomposition} (see \cite{PreS,jgp}):  
\beq \label{birkhoff1}
\hat X^{-1}(x) \hat Y(y) = \hat H_-(x,y) \hat H_+(x,y), 
\quad \textup{with} \quad \hat H_\pm(x,y) \in \mathcal{G}^\pm,
\eeq
and finally defining $\hat F$ by:
\beq  \label{Fdef}
\hat F(x,y) = \hat X(x) \hat H_-(x,y).
\eeq
  The admissible frame is
semi-regular if and only if the potential pair is semi-regular, and 
\emph{weakly} regular if and only if the potential pair is regular.

Conversely, any admissible frame $\hat F$ is associated to a potential pair
$(\hat X_+^{-1} \dd \hat X_+, \hat Y_-^{-1} \dd \hat Y_- )$, where $\hat X_+$ and $\hat Y_-$ are
obtained by the pair of pointwise \emph{normalized} Birkhoff factorizations
\beqas
\hat F = \hat X_+ \hat G_-, \quad \quad \hat X_+(x) \in \mathcal{G}^+, \,\,\hat G_-(x,y) \in \mathcal{G}^-,
      \quad \hat X_+ \big|_{\lambda=0}=I, \\
\hat F  = \hat Y _-\hat G_+,  \quad \quad \hat Y_-(y) \in \mathcal{G}^-, \,\,
     \hat G_+(x,y)\in \mathcal{G}^+, \quad \hat Y_-\big|_{\lambda=\infty}=I.
\eeqas
Note that the special form of an admissible connection automatically implies that $\hat X_+$ and $\hat Y_-$
depend only on $x$ and $y$ respectively.
Because of the normalization, these potentials are uniquely determined by $\hat F$ and
 have particularly simple forms:
\[
\hat X_+^{-1} \dd \hat X_+ = \bbar 0 & \zeta (x) \\ - \overline{\zeta (x)} & 0 \ebar \lambda \dd x, \quad
\hat Y_-^{-1} \dd \hat Y_- = \bbar 0 &  \xi (y) \\ - \overline{\xi (y)} & 0 \ebar \lambda^{-1} \dd y,
\]
and are called \emph{normalized potentials}.

\subsection{Potentials for non-characteristic singularities} \label{singpotentialsect}
Given the d'Alembert representation just described, a
generalized pseudospherical surface is locally determined by an arbitrary pair of 
(real)-differentiable complex-valued 
functions $\zeta (x)$ and $\xi(y)$. A generic function $\real \to \C$ is
 non-vanishing, and
so a generic normalized 
potential pair is regular, and the corresponding pseudospherical surface
is a wave front.

Our aim here is to give potentials that produce \emph{prescribed} singular curves.
We will consider separately two cases: that the singular set is or is not a characteristic
curve, starting with the non-characteristic case. 
For this, rather than normalized potentials, a better choice is a form of the 
\emph{boundary potential} pairs, introduced in \cite{dbms1} for the purpose of giving
prescribed values of $\hat F$ along a non-characteristic curve.  We assume that the singular
curve is non-degenerate and never parallel to a null curve.
Then we can always find local box coordinates $(x,y)$ such
that the curve is given by 
\[
y=\varepsilon x,  \quad \quad \varepsilon = \pm 1.
\]
Suppose given the value for $\hat F(x,y)$,  along the curve $y=\varepsilon x$.
In the coordinates
\[
u=\frac{1}{2}(x+\varepsilon y), \quad v= \frac{1}{2}(x-\varepsilon y),
\]
the curve is given by $v=0$,  and the value of $\hat F$ along the curve is given by 
\[
\hat F_0(u) = \hat F(u,0).
\]
Since $\hat F$ is assumed to be an admissible frame we have, from Definition \ref{admissibledef},
\beq  \label{FzeroMCform}
\hat F_0^{-1} \dd \hat F_0 = 
(\varepsilon V_\mathfrak{p} \lambda^{-1} + U_{\mathfrak{k}}+\varepsilon V_{\mathfrak{k}} + U_{\mathfrak{p}} \lambda ) \dd u.
\eeq
Since the highest and lowest powers of $\lambda$ appearing are $1$ and $-1$ respectively, this $1$-form 
is valid as either $\hat \chi$ or $\hat \psi$ or both  in a potential
pair. Hence, setting
\[
\hat X(x) = \hat F_0(x), \quad \hat Y(y)= \hat F_0(\varepsilon y),
\]
gives a valid potential pair $(\hat X^{-1} \dd \hat X, \hat Y^{-1} \dd \hat Y)$, called the 
\emph{boundary potential pair} relative to the curve $v=0$.
For this potential pair, the Birkhoff decomposition \eqref{birkhoff1} is trivial along
the curve $v=0$, since $\hat X(v=0) = \hat Y(v=0)$, and so the admissible frame $\widetilde F$
obtained by \eqref{Fdef} agrees with $\hat F$ along this curve. A uniqueness argument using
normalized potentials (see \cite{dbms1}) then shows that $\widetilde F$ and $\hat F$ determine the same harmonic map.

We now want to construct $\hat F_0(u)$ along a curve $v=0$ from geometric data of a 
pseudospherical frontal $f$ prescribed along the
curve.   Since the curve is non-characteristic, and assumed non-degenerate,
$f$ is necessarily a \emph{wave front} (see Section \ref{charsingsection}).
From Section \ref{extendedframesect}, we can assume that we are
 given box coordinates $(x,y)$ that are asymptotic coordinates  for $f$, the angle $\phi$
is the oriented angle between $f_x$ and $f_y$ and the first and second fundamental forms are:
\[
I = A^2 \dd x^2 + 2 \cos(\phi) A B \dd x \dd y + B^2 \dd y^2, \quad
II = 2A B  \sin(\phi) \dd x \dd y,
\]
where $A=|f_x|$ and $B=|f_y|$.
Using the same frame $F$ as in the proof of Lemma \ref{frontlemma}, defined by
\eqref{framedef}, we have:
\[
U_\mathfrak{k}=-\phi_x e_3, \quad U_\mathfrak{p}= A e_1, \quad V_\mathfrak{k}=0, \quad
V_\mathfrak{p} = -B(\cos \phi e_1 + \sin \phi e_2).
\]
In the coordinates $(u,v)$ we have $\phi_x=\frac{1}{2}(\phi_u+\phi_v)$.
If $v=0$ is a singular curve, we have $\phi=k \pi$ constant along the curve, so $\phi_u(u,0)=0$.
Without
loss of generality, we take $k=0$, i.e. $\phi(u,0)=0$.  The basic data that determine the 
boundary potential are thus 
\[
U_\mathfrak{k}=-\frac{\phi_v(u,0)}{2} e_3,  \quad  U_\mathfrak{p}=A(u) e_1, \quad 
V_\mathfrak{k}=0, \quad V_\mathfrak{p}=-B(u) e_1,
\]
 where $A(u)=|f_x(u,0)|$ and 
$B(u)=|f_y(u,0)|$. 
\begin{figure}[ht]
\centering
\begin{small}
$
\begin{array}{ccc}
\includegraphics[height=28mm]{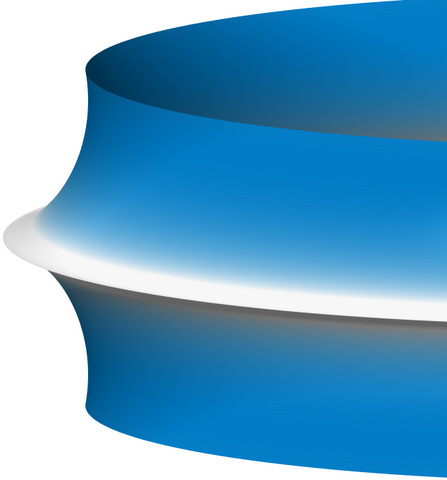}   \quad & \quad
\includegraphics[height=28mm]{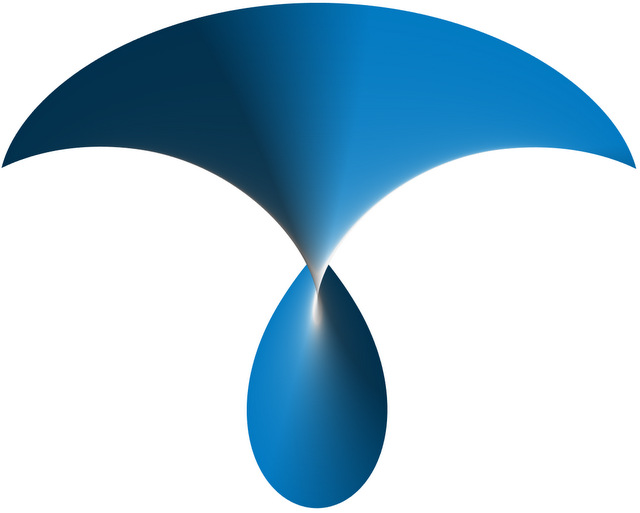}  \quad & \quad
\includegraphics[height=28mm]{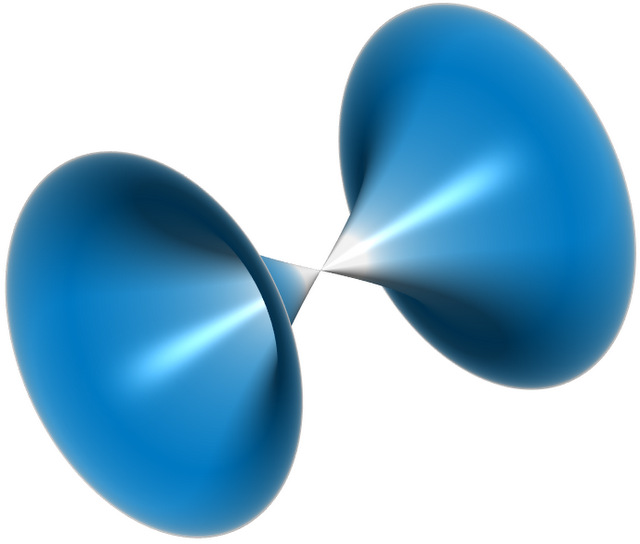}   \\
(A, \varepsilon B)=(1, 1)  &
(A,\varepsilon B)=(1+t, -1+t) &
(A,\varepsilon B) = (1, -1)
\end{array}
$
\end{small}
\caption{Non-degenerate singularities:  cuspidal edge, swallowtail and cone. } \label{singularitiesfig}
\end{figure}   
Substituting into \eqref{FzeroMCform}, and applying 
Theorem \ref{genericsingthm}, we conclude that all non-degenerate non-characteristic singular curves on 
pseudospherical frontals are obtained from the following theorem:
\begin{theorem}  \label{mainthm1}
Let $J$ be an open interval, $A, \,\, B: J \to (0,\infty)$ and $\beta: J \to \real$ three differentiable functions.
 Let $\varepsilon = \pm 1$ and set 
\[
\hat \eta := \left(-\varepsilon B(t) e_1 \lambda^{-1} -\frac{\beta(t)}{2} e_3 + A(t) e_1 \lambda \right) \dd t.
\]
Consider the potential pair $(\hat \eta, \hat \eta)$ defined on the intervals $I_x=J$ and $I_y=\varepsilon J$.  Let $f:I_x \times I_y \to \real^3$ be the generalized pseudospherical surface
obtained from $(\hat \eta, \hat \eta)$ via the generalized d'Alembert method. Then 
\begin{enumerate}
\item The set  $C:=\{y=\varepsilon x\}$ is a singular set for $f$.  
\item $C$ is non-degenerate at  a point $(x_0,\varepsilon x_0)$ if and only if $\beta(x_0) \neq 0$. In this case
    \begin{enumerate}
    \item  $C$ is diffeomorphic to a  cuspidal edge in a neighbourhood of $(x_0,\varepsilon x_0)$ if and only if $A(x_0)+\varepsilon B(x_0) \neq 0$.
    \item  $C$ is diffeomorphic to a swallowtail in a neighbourhood of $(x_0,\varepsilon x_0)$ if and only if  $A(x_0)+\varepsilon B(x_0) = 0$ and 
$A^\prime(x_0) + \varepsilon B^\prime(x_0) \neq 0$.
 \item $C$ is diffeomorphic to a cone singularity if and only if $A(x)+\varepsilon B(x) \equiv 0$.
\end{enumerate}
\end{enumerate}
\end{theorem}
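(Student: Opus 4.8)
The plan is to show that the prescribed potential $\hat\eta$ is precisely the boundary potential of Section \ref{singpotentialsect} for the geometric data
\[
U_\mathfrak{p}=A e_1,\quad V_\mathfrak{p}=-B e_1,\quad U_\mathfrak{k}+\varepsilon V_\mathfrak{k}=-\tfrac{\beta}{2}e_3,
\]
so that the frontal $f$ realizes these data along $C$, and then to read off the three conclusions directly from Theorem \ref{genericsingthm}. First I would check admissibility and regularity of the pair: the only powers of $\lambda$ occurring in $\hat\eta$ are $-1,0,1$, so $\hat\eta$ is legitimate as both $\hat\chi$ and $\hat\psi$, and its leading coefficients $\chi_1=A e_1$ and $\psi_{-1}=-\varepsilon B e_1$ are nowhere zero because $A,B>0$. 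Hence the pair is regular, the admissible frame is weakly regular, and by Lemma \ref{frontlemma} the resulting $f$ is a wave front on all of $I_x\times I_y$; this places us squarely in the setting of Section \ref{noncharsingularities} and Theorem \ref{genericsingthm}.

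Passing to coordinates $u=\tfrac12(x+\varepsilon y)$, $v=\tfrac12(x-\varepsilon y)$, the set $C=\{y=\varepsilon x\}$ becomes $\{v=0\}$, with $u=x$ there. Because both $\hat X$ and $\hat Y$ integrate the single one-form $\hat\eta$, they agree along $C$, so $\hat X^{-1}\hat Y=I$ there, the Birkhoff decomposition \eqref{birkhoff1} is trivial, and \eqref{Fdef} gives $\hat F=\hat X=\hat F_0$ along $C$, where $\hat F_0^{-1}\dd \hat F_0=\hat\eta$. The key observation is that the twisted ($\lambda$-graded) structure lets me recover $U_\mathfrak{p}$ and $V_\mathfrak{p}$ along $C$ from this tangential information alone: by \eqref{FzeroMCform} the $\dd u$-component of $\hat F^{-1}\dd\hat F$ is $\varepsilon V_\mathfrak{p}\lambda^{-1}+(U_\mathfrak{k}+\varepsilon V_\mathfrak{k})+U_\mathfrak{p}\lambda$, and matching the $\lambda^{\pm1}$ coefficients with those of $\hat\eta$ yields $U_\mathfrak{p}=A e_1$ and $V_\mathfrak{p}=-B e_1$ on $C$. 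Therefore $f_x=A\Ad_F e_1$ and $f_y=B\Ad_F e_1$ are parallel with $|f_x|=A$, $|f_y|=B$ and angle $\phi=0$, which proves claim (1) and identifies $A,B$ with the functions appearing in Theorem \ref{genericsingthm}.

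For the non-degeneracy dichotomy I would compute $\dd\phi$ along $C$. Since $\phi\equiv0$ on $C$ we have $\phi_u\equiv0$ there, so $\dd\phi|_C=\phi_v\,\dd v$, and by \eqref{nondegeneracy} non-degeneracy at $(x_0,\varepsilon x_0)$ is the condition $\phi_v(x_0,0)\neq0$. It then remains to identify $\phi_v$ with $\beta$. Gauging the d'Alembert frame to the geometric frame of \eqref{framedef} (in which $V_\mathfrak{k}=0$ and $U_\mathfrak{k}=-\phi_x e_3$) by $D=\mathrm{diag}(e^{i\gamma/2},e^{-i\gamma/2})$, the requirement $U_\mathfrak{p}=A e_1$ for both frames forces $\gamma\equiv0$ on $C$, so $\gamma_u=0$ there; since $\partial_x+\varepsilon\partial_y=\partial_u$, the combination $U_\mathfrak{k}+\varepsilon V_\mathfrak{k}$ is unchanged by the gauge along $C$. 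It therefore equals both $-\tfrac\beta2 e_3$ (from $\hat\eta$) and, in the geometric gauge, $U_\mathfrak{k}=-\phi_x e_3=-\tfrac{\phi_v}{2}e_3$ (using $\phi_x=\tfrac12\phi_v$ on $C$), giving $\phi_v=\beta$ on $C$. Thus $C$ is non-degenerate at $(x_0,\varepsilon x_0)$ iff $\beta(x_0)\neq0$, and the implicit function theorem makes $C$ the local singular curve there. Finally, since $C$ is the diagonal it is never tangent to $\partial_x$ or $\partial_y$, so Theorem \ref{genericsingthm} applies verbatim with $A(t)=|f_x(t,\varepsilon t)|$ and $B(t)=|f_y(t,\varepsilon t)|$, yielding (2)(a)--(c) exactly as stated.

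The step I expect to demand the most care is the identification $\beta=\phi_v$, that is, verifying that the diagonal ($\mathfrak{k}$-valued) part of the d'Alembert connection transfers to the geometric angle data correctly. The delicacy is that the d'Alembert frame need not a priori be in the geometric gauge of \eqref{framedef} away from $C$; the gauge argument above shows the discrepancy is tangentially trivial along $C$, which is exactly what is needed, but it is the point where one must reason carefully rather than substitute formally. Everything else is either the regularity bookkeeping of the first paragraph or a direct appeal to the already-proved Theorem \ref{genericsingthm}.
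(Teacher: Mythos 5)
Your proposal is correct and follows essentially the same route as the paper: the paper derives Theorem \ref{mainthm1} by identifying $\hat\eta$ as the boundary potential of Section \ref{singpotentialsect} (with $\beta=\phi_v$, triviality of the Birkhoff decomposition along $y=\varepsilon x$, and the data $U_\mathfrak{p}=Ae_1$, $V_\mathfrak{p}=-Be_1$) and then invoking Theorem \ref{genericsingthm}, exactly as you do. Your write-up is in fact slightly more explicit than the paper's, since you run the argument in the converse direction (from potential to surface) and spell out the gauge argument showing that the tangential $\mathfrak{k}$-component, and hence the identification $\beta=\phi_v$, is gauge-invariant along the singular curve.
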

Three non-degenerate examples are computed in Figure \ref{singularitiesfig},
 all with $\beta(t)=2$.
Some degenerate examples are shown below in Figure \ref{inflectionfig1}.
\subsection{Prescribed non-characteristic cuspidal edges}
Theorem \ref{mainthm1} gives the boundary potential pair for the generic 
non-characteristic singularities
of pseudospherical surfaces, as well as cones.   We now adapt this to produce 
pseudospherical surfaces with a given curve in $\real^3$ as a singular curve. 
We treat the case that the curve  is regular in $\real^3$, which means that the
singular curve, where non-degenerate, must be a cuspidal edge.  

The geometric Cauchy problem for \emph{regular} pseudospherical surfaces was 
studied in \cite{dbms1}.  For a non-characteristic curve,
there is a unique immersed solution containing a given curve $\gamma$ and with the surface normal $N$
prescribed along the curve, with a regularity condition $\langle \gamma^\prime(t), N^\prime(t) \rangle \neq 0$.
For the non-characteristic \emph{singular} geometric Cauchy problem we replace the
regularity condition with a \emph{singularity} condition, 
$\langle \gamma^\prime(t), N^\prime(t) \rangle = 0$:  \\
\noindent \textbf{Non-characteristic singular geometric Cauchy data along an open interval $J$:} 
\begin{enumerate}
\item A regular curve $\gamma: J \to \real^3$;
\item A unit vector field $Z: J \to \SSS^2 \subset \real^3$, satisfying
\[
\langle Z(t), \gamma^\prime(t) \rangle = 0,
\quad  \quad \langle Z^\prime(t), \gamma^\prime(t) \rangle =0.
\]
\item Weak regularity condition:
\[ |\gamma^\prime(t) | \neq |Z^\prime(t)|.
\]
\end{enumerate}
The above conditions are necessarily satisfied along a non-characteristic singular curve
on a pseudospherical frontal. We also find that
 the singular curve is non-degenerate at a point if and only if the curvature
$\kappa$ of the curve $\gamma$ is non-zero at that point.  Adding this assumption 
then simplifies the above description of the geometric Cauchy data.  Suppose that
$\gamma(s)$ is parameterised by arc-length. Let ${\bf t}$, ${\bf n}$ and
${\bf b}$ be the Frenet-Serret frame along the curve. The vector field $Z$ must satisfy: $\langle Z, {\bf t} \rangle =0$
and $\langle Z^\prime, {\bf t} \rangle=0$. Differentiating the first equation gives
\[
\langle Z^\prime, {\bf t} \rangle = -\langle Z, {\bf t}^\prime  \rangle
  = -\kappa \langle Z, {\bf n}\rangle.
\]
Hence, the assumptions $\langle Z, {\bf t} \rangle =0$ and $\kappa \neq 0$ imply that
$\langle Z, {\bf n}\rangle=0$.  It follows that $Z=\pm {\bf b}$, where ${\bf b}$ is the
unit binormal to the curve. Since ${\bf b}^\prime = -\tau {\bf n}$, where $\tau$
is the torsion, the weak regularity condition $|\gamma^\prime| \neq |N^\prime|$
becomes $\tau \neq \pm 1$.  To simplify matters, we will also take $\tau>-1$.
Hence, for non-degenerate singular curves,  the geometric
Cauchy data is the curve given in the following result:
\begin{theorem} \label{mainthm2}
Let $\gamma: J \to \real^3$ be a regular arc-length parameterised curve,
with curvature $\kappa$ and torsion $\tau$ satisfying
\[ \kappa(s) \neq 0, \quad \hbox{and either} \quad  |\tau(s)|< 1, \quad \hbox{or}  \quad \tau(s) > 1 \]
along $J$.  Let $\varepsilon:= \hbox{sign}(\tau-1)$.  Then:
\begin{enumerate}
\item \label{item1}
There exists, unique up to a Euclidean motion, a pseudospherical wave front $f(u,v)$, with box coordinates
 $(x,y)$  and $u=(x+\varepsilon y)/2$, $v=(x-\varepsilon y)/2$,
containing
$\gamma$ as a non-characteristic singular curve in the form $f(u,0)=\gamma(u)$.
 The singular curve is non-degenerate.
\item \label{item2}
The surface $f$
is given by the d'Alembert method, with potential pair $(\hat \eta,  \hat \eta)$ on $J \times \varepsilon J$,
with 
\[
\hat \eta = \left(\frac{\tau-1}{2}e_1 \lambda^{-1} + \kappa e_3 + \frac{\tau + 1}{2} e_1 \lambda \right) \dd s.
\] 
\item  \label{item3}
All non-degenerate non-characteristic singular curves of pseudospherical frontals that have a regular image in $\real^3$
are obtained this way.
\end{enumerate}
\end{theorem}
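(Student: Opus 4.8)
The plan is to reduce Theorem \ref{mainthm2} to Theorem \ref{mainthm1} by matching the Frenet--Serret data of $\gamma$ to the abstract data $(A,B,\beta)$ appearing there, and then verifying that the resulting frontal reproduces $\gamma$ exactly as its singular curve. First I would recall that, along a non-characteristic non-degenerate singular curve, the discussion preceding Theorem \ref{mainthm1} forces the boundary data to have the form recorded in \eqref{FzeroMCform}, with $U_\mathfrak{k}=-\tfrac{\beta}{2}e_3$, $U_\mathfrak{p}=A e_1$, $V_\mathfrak{p}=-B e_1$. The task is therefore to compute, for a curve $\gamma$ with the stated $\kappa$ and $\tau$, what the three functions $A$, $B$, $\beta$ must be, and to check that they are exactly the coefficients in the potential $\hat\eta$ given in item \eqref{item2}.

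The computational heart is the following. Since $\gamma$ is arc-length parametrized and is the singular curve $v=0$, I would use $f_u=f_x+\varepsilon f_y$ along the curve (from $u=(x+\varepsilon y)/2$) together with the frame relations $f_x=A\,\Ad_F e_1$ and $f_y=-B\,\Ad_F(\cos\phi\, e_1+\sin\phi\, e_2)$ specialized to $\phi(u,0)=0$. This gives $\gamma'=f_u=(A-\varepsilon B)\Ad_F e_1$, so that $|\gamma'|=1$ pins down $|A-\varepsilon B|=1$, and $\Ad_F e_1$ becomes the unit tangent $\mathbf{t}$. Differentiating and using the harmonicity/integrability relations, together with $N=\pm\mathbf{b}$ established in the paragraph before the theorem, I expect the curvature to emerge as $\kappa=\beta$ (up to the normalization already fixed by $\phi_u(u,0)=0$) and the torsion to be governed by $A+\varepsilon B$; solving the pair
\[
A-\varepsilon B=1,\qquad A+\varepsilon B=\tau
\]
yields $A=\tfrac{\tau+1}{2}$, $\varepsilon B=\tfrac{\tau-1}{2}$, and $\beta=\kappa$, which is precisely $\hat\eta$. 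The choice $\varepsilon=\mathrm{sign}(\tau-1)$ is forced by the positivity requirement $B>0$ in Theorem \ref{mainthm1}, and the hypotheses $\kappa\neq0$, $|\tau|<1$ or $\tau>1$ translate exactly into $\beta\neq0$ (non-degeneracy) and $A,B>0$ with $A+\varepsilon B\neq0$ (wave front, cuspidal edge rather than cone), via part (2)(a) of Theorem \ref{mainthm1}.

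For existence in \eqref{item1} I would then run the d'Alembert method on $(\hat\eta,\hat\eta)$ to produce a pseudospherical wave front, invoke Theorem \ref{mainthm1} to identify $C=\{y=\varepsilon x\}$ as a non-degenerate cuspidal-edge singular curve, and verify via the Sym formula that $f(u,0)$ is a curve with the prescribed $\kappa,\tau$; by the fundamental theorem of space curves this curve coincides with $\gamma$ up to a Euclidean motion, which is exactly the stated freedom. Uniqueness is the delicate point: two wave fronts containing $\gamma$ as such a singular curve have the same boundary data $\hat F_0(u)$ (since $A,B,\beta$ are determined by $\kappa,\tau$ as above, and the frame is fixed up to the gauge $\hat F\mapsto\hat F D$ used in Lemma \ref{frontlemma}, which does not affect $N$ or $f$), hence the same boundary potential pair, and the uniqueness argument via normalized potentials cited from \cite{dbms1} forces them to determine the same harmonic map and therefore the same frontal. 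The main obstacle I anticipate is precisely this last identification: one must argue carefully that the singular-curve conditions \emph{uniquely} reconstruct the full $1$-jet of $\hat F$ transverse to $C$, i.e. that no additional freedom survives after imposing $\phi(u,0)=0$ and the Frenet data—this is where the equation $\langle Z',\gamma'\rangle=0$ and the identification $Z=\pm\mathbf{b}$ must be used to eliminate any would-be normal-direction ambiguity.
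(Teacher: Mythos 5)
Your overall strategy is exactly the paper's: reduce to Theorem \ref{mainthm1}, determine $(A,B,\beta)$ from the Frenet data of $\gamma$, and get uniqueness from the fact that the boundary potential is forced. However, the computation at the heart of your argument contains sign errors, and the values you arrive at do \emph{not} substitute into the $\hat\eta$ of Theorem \ref{mainthm1} to give the $\hat\eta$ stated in Theorem \ref{mainthm2}. First, from \eqref{assocfrontaleqn} one has $f_y=-\Ad_F V_\mathfrak{p}$, and with $V_\mathfrak{p}=-B(\cos\phi\,e_1+\sin\phi\,e_2)$ this gives $f_y=+B\,\Ad_F(\cos\phi\,e_1+\sin\phi\,e_2)$, not the minus sign you wrote. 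Consequently $\gamma'=f_u=f_x+\varepsilon f_y=(A+\varepsilon B)\Ad_F e_1$, so the unit-speed condition reads $(A+\varepsilon B)^2=1$; meanwhile the Maurer--Cartan form $F^{-1}F_u$ has $e_1$-part $(A-\varepsilon B)e_1$ (the relative sign between $f_y$ and $V_\mathfrak{p}$ is what makes these two combinations different), and differentiating $\mathbf{b}=\Ad_F e_3$ gives $\mathbf{b}'=(\varepsilon B-A)\Ad_F e_2$, hence $\tau=A-\varepsilon B$. You have these two roles reversed, and your resulting $\varepsilon B=\tfrac{\tau-1}{2}$ has the wrong sign: the $\lambda^{-1}$ coefficient of the potential is $-\varepsilon B\,e_1$, which must equal $\tfrac{\tau-1}{2}e_1$, forcing $\varepsilon B=\tfrac{1-\tau}{2}$. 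Second, $\beta=\kappa$ is not correct ``up to normalization'': from $\kappa\mathbf{n}=\gamma''=\Ad_F[F^{-1}F_u,e_1]=-\tfrac{\beta}{2}\Ad_F e_2$ one gets $\beta=-2\kappa$, which is what makes the $e_3$-coefficient of $\hat\eta$, namely $-\beta/2$, equal to $+\kappa$.

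On the uniqueness point you flag as delicate: no separate argument about the transverse $1$-jet is needed beyond what you have already computed. The boundary potential \eqref{FzeroMCform} requires only the restriction to the curve of $U_\mathfrak{k}+\varepsilon V_\mathfrak{k}$, $U_\mathfrak{p}$ and $V_\mathfrak{p}$; the only transverse information entering is $\phi_v(u,0)$, which sits inside $U_\mathfrak{k}=-\tfrac{\beta}{2}e_3$ and is pinned down by $\kappa$ via $\beta=-2\kappa$. The residual ambiguities are the two positive solutions of $A^2+2\varepsilon AB+B^2=1$ (which correspond to interchanging $x$ and $y$ and give congruent surfaces) and the sign in $N=\pm\mathbf{b}$ (absorbed by a Euclidean motion), so uniqueness follows as you intend once the coefficient computation above is corrected.
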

\begin{proof}
By Theorem \ref{mainthm1} there is a generalized pseudospherical surface generated by any triple
of functions $A$, $B$ and $\beta$.  The surface is a wave front if and only if both $A$ and $B$
are non-vanishing, which, in this case means $\tau\neq \pm 1$. The non-degeneracy 
condition is $\beta = -2\kappa(t) \neq 0$.

Now suppose the existence of a  pseudospherical wave front  $f:J \times \varepsilon J \to \real$ with  $f(u,0)=\gamma(u)$ a non-degenerate non-characteristic singular curve.  As described above, it follows that the surface normal satisfies 
$N(u,0)= \pm {\bf b}(u)$. Since we are only looking for the potential up to a Euclidean motion, we can take
\[
N(u,0)= {\bf b}(u).
\]
 Along the singular curve, the vectors $f_u$, $f_v$, $f_x$ and $f_y$ are all parallel.
As previously, let $F$ be the frame defined at \eqref{framedef}, so that, on $v=0$,
\[
\Ad _F e_1 = \frac{f_x}{|f_x|}  = \frac{f_y}{|f_y|}, \quad \Ad _F e_3 = N ,
\]
which is to say that
\[
\Ad_F e_1 = f_u = \gamma^\prime,  \quad \Ad_F e_2 = {\bf n}, \quad \Ad _F e_3 = {\bf b}.
\]

We have already shown in Section \ref{singpotentialsect} that along $v=0$ 
\[
F^{-1} F_u = (-\varepsilon B(u) + A(u)) e_1 -\frac{\beta(u)}{2} e_3,
\]
where $A(u)=|f_x(u,0)|$ and $B(u)=|f_y(u,0)|$. 
Differentiating ${\bf b} = \Ad_F (e_3)$, we have
\beqas
{\bf b}^\prime &=& \Ad_F[F^{-1} F_u, e_3] \\
&=&   (\varepsilon B -A)  \Ad _F ( e_2 ),
\eeqas
so that
\[
\varepsilon B(u)-A(u) = - \tau(u).
\]
We also have $\gamma^\prime(u)=f_u = f_x + \varepsilon f_y$, from which
\[
1= A(u)^2+2\varepsilon A(u) B(u) + B(u)^2.
\]
There are, in general, two solutions for positive $A$ and $B$, but the surfaces obtained from the 
corresponding potentials are congruent after interchanging $x$ and $y$.
Hence we can take the solution:
\[
A = \frac{\tau +1}{2}, \quad \varepsilon B = \frac{1-\tau}{2}, \quad \varepsilon = \hbox{sign}(1-\tau).
\]
To find  $\beta$, we use 
\beqas
\kappa  {\bf n} = \gamma^{\prime \prime} &=&
   \Ad _F[F^{-1} F_u, e_1] \\
	&=& -\frac{\beta}{2}  \Ad_F ( e_2 ),
\eeqas
so $\beta=-2 \kappa$. 
Substituting the expressions for $A$, $B$, $\varepsilon$ and $\beta$ into the potential $\hat \eta$
of Theorem \ref{mainthm1} gives the potential in the theorem statement.
 Since the above data were obtained from an arbitrary solution
of the geometric Cauchy problem, this also proves uniqueness, and so items \eqref{item1} and \eqref{item2}
are proved.  Item \eqref{item3} follows from the fact, already explained, that, for a non-degenerate
non-characteristic singular curve the curvature is non-vanishing and the torsion satisfies $|\tau|\neq 1$.
\end{proof}


\begin{figure}[ht]
\centering
$
\begin{array}{ccc}
\includegraphics[height=30mm]{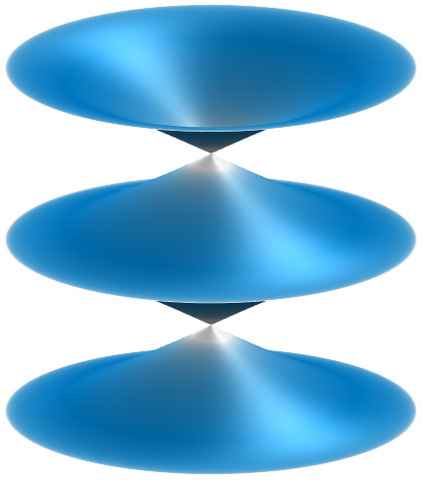}  \quad & \quad \quad
\includegraphics[height=30mm]{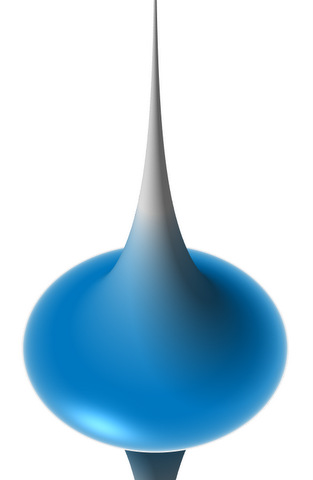}  \quad \quad & \quad
\includegraphics[height=30mm]{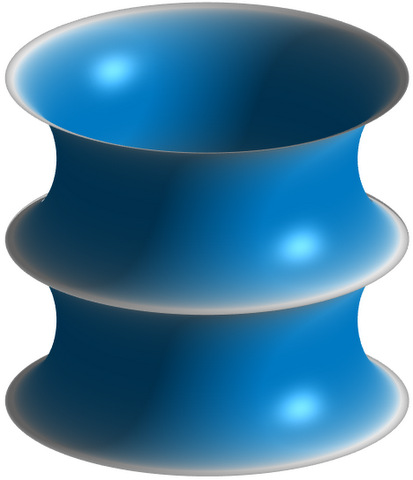}  
\end{array}
$
\caption{Example \ref{example1}, $R=0.5$, $R=1$ and $R=1.5$ . } \label{sorfig}
\end{figure}

\begin{example} \label{example1}
Circles: Take $\gamma(t)=R(\cos t, \sin t, 0)$, where $R>0$. The arc-length parameter, 
curvature and torsion are $s=Rt$, $\kappa=1/R$ and $\tau=0$. The potential is thus: 
\[
\hat \eta = \left(-\frac{R}{2}e_1 \lambda^{-1} + e_3 + \frac{R}{2}e_1 \lambda\right) \dd t,
\]
and this gives the well-known pseudospherical surfaces of revolution. The case $R=1$ is the pseudosphere.
\end{example}

\begin{example} \label{example2}
Helices:
Taking $\kappa$ and $\tau$ both constant, with $\tau\neq 0$, gives a surface containing a circular helix as a cuspidal edge
(Figure \ref{helixtypesfig}). 
 Helical,  as well as rotational, constant curvature surfaces, 
were studied by Minding in \cite{minding}.
These surfaces are generally periodic in the $v$ direction, which can 
be seen  by considering that  the curve is
invariant under a $1$-parameter family of rigid motions (a screw-motion). The surface
must also have this symmetry by uniqueness of the solution to the geometric Cauchy problem.
 Hence the next singular curve encountered when moving in the $v$ direction
is also a circular helix. By the symmetry of the initial data, it follows that every second singular
curve is congruent.  

\begin{figure}[ht]
\centering
$
\begin{array}{cccc}
\includegraphics[height=30mm]{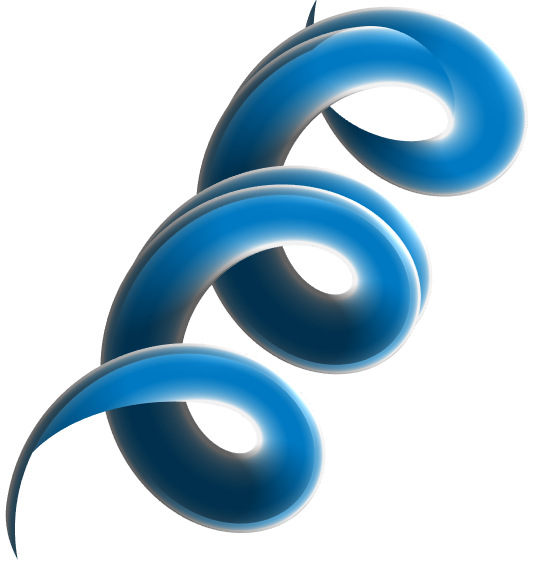}  & 
\includegraphics[height=30mm]{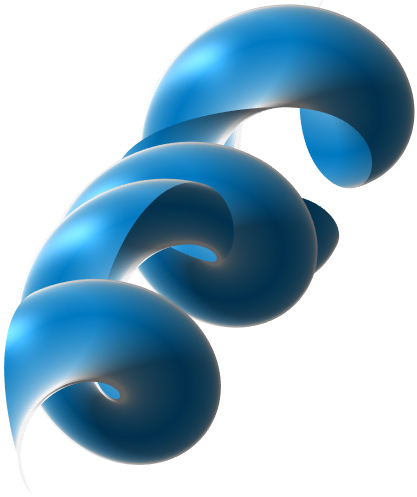}  &
\includegraphics[height=30mm]{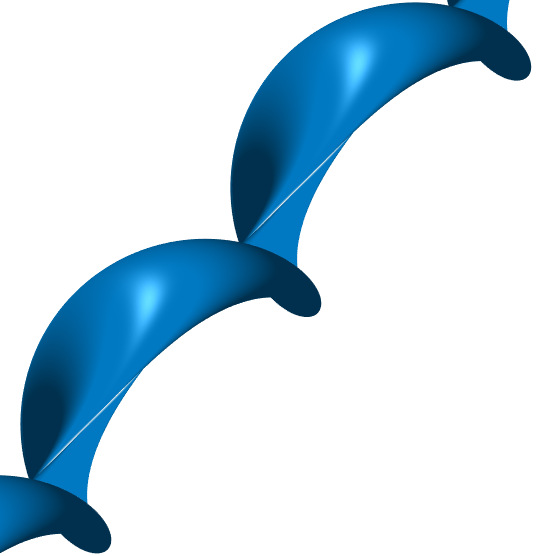}   & 
\includegraphics[height=30mm]{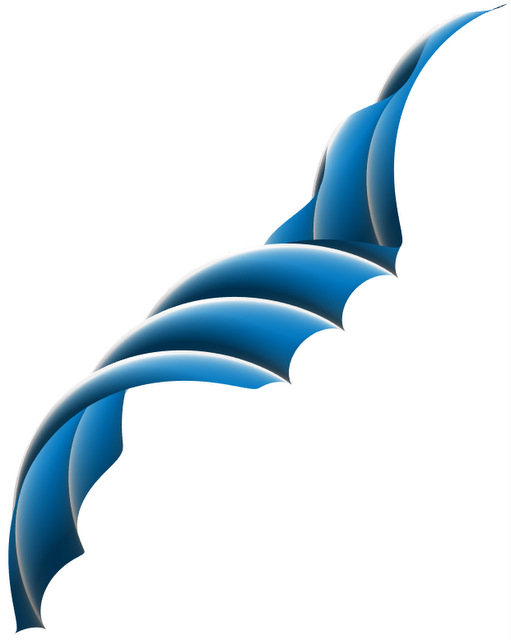}   \\
\kappa^2+\tau^2>1,  &
\kappa^2+\tau^2>1, &
\kappa^2+\tau^2=1 &  \kappa^2+\tau^2<1 \\
|\tau|>1  &  |\tau|<1  & & 
\end{array}
$
\caption{Examples of helical pseudospherical surfaces. } \label{helixtypesfig}
\end{figure} 
As with the case of the circle, there are essentially three types:
\begin{enumerate}
\item Case $\kappa^2+\tau^2>1$:  Here there are two sets of helices with the same axis but different
radius.  The initial curve is on the outer cylinder when $|\tau|>1$, and the inner when $|\tau|<1$.
\item  Case $\kappa^2+\tau^2=1$:   The special case where the inner helices degenerate to a 
straight line.   These are \emph{Dini's surfaces}, which can be parametrized as 
\[
f(\zeta,\xi)=(a \cos \zeta \sin \xi, a \sin \zeta \sin \xi, a(\cos \xi + \ln(\tan(\xi/2))) + b\xi),
\]
where, for the case of constant curvature $K=-1$, we must have $a^2+b^2=1$.
The surface has singularities at $\cos(\xi)=0$, so we can take the helix 
\[ 
\gamma(t) = f(t,\pi/2)=(a \cos t, a \sin t, bt)
\]
 as the initial curve.
We then have $\kappa=|a|$ and $\tau=b$.  Hence  Dini's surfaces are
given by constant $\kappa$ and $\tau$, with $\kappa^2 + \tau^2=1$.
\item Case $\kappa^2+\tau^2<1$:  Here the inner helix disappears completely, so that 
all singular curves are congruent.
\end{enumerate}
\end{example}

\begin{example} \label{cylindercurve}
The closed curve $\gamma(t)=(\cos(3t), \sin(3t), -\sin(t))$ lies on a round cylinder and has two self-intersections.
Computing $\kappa(t)=3(8\cos^2(t)+82)^{1/2}(\cos^2(t)+9)^{3/2}$, $\tau=-12\cos(t)/(4\cos^2(t)+41)$ and
$\dd s = \sqrt{\cos^2(t)+9} \dd t$, we see that $\kappa$ is non-vanishing and $|\tau|<1$. The 
surface that contains this curve as a cuspidal edge is shown in Figure \ref{cylfig}.
\end{example}

\begin{figure}[ht]
\centering
$
\begin{array}{cc}
\includegraphics[height=32mm]{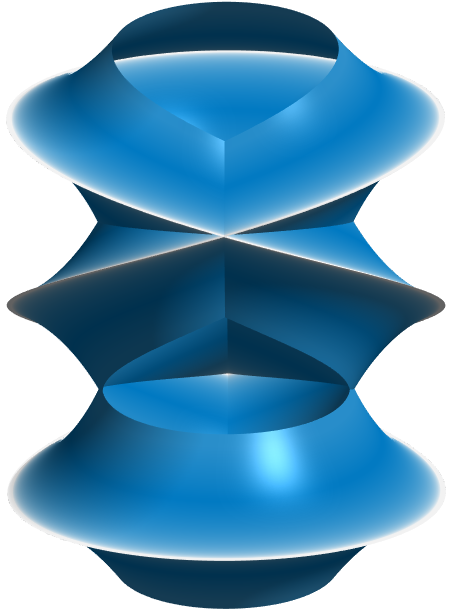}  \quad & \quad
\includegraphics[height=32mm]{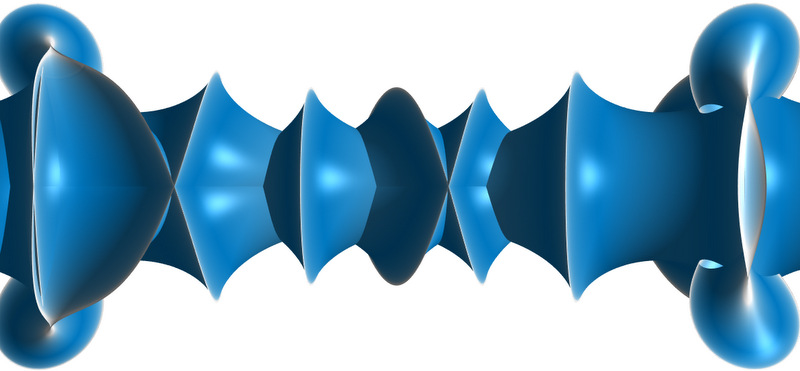} 
\end{array}
$
\caption{Example \ref{cylindercurve}.}  \label{cylfig}
\end{figure}

\begin{example} \label{degenexamples}
 Examples with inflections and with $\tau$ taking the value $1$:
Theorem \ref{mainthm2} is stated for curves with $\kappa$ non-vanishing and $\tau \neq \pm 1$.
However, we can use any functions $\kappa$ and $\tau$ and still obtain a valid potential pair, and 
therefore a pseudospherical frontal.  If we take $\kappa \equiv 0$, the solution degenerates to
a straight line.  If we take $\tau \equiv \pm 1$, the solution degenerates to a helix curve.

\begin{figure}[ht]
\centering
$
\begin{array}{cc}
\includegraphics[height=28mm]{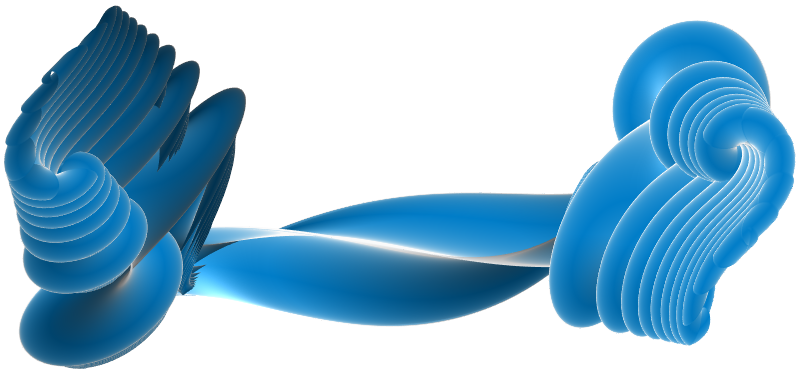}    \quad &  \quad
		    	\includegraphics[height=32mm]{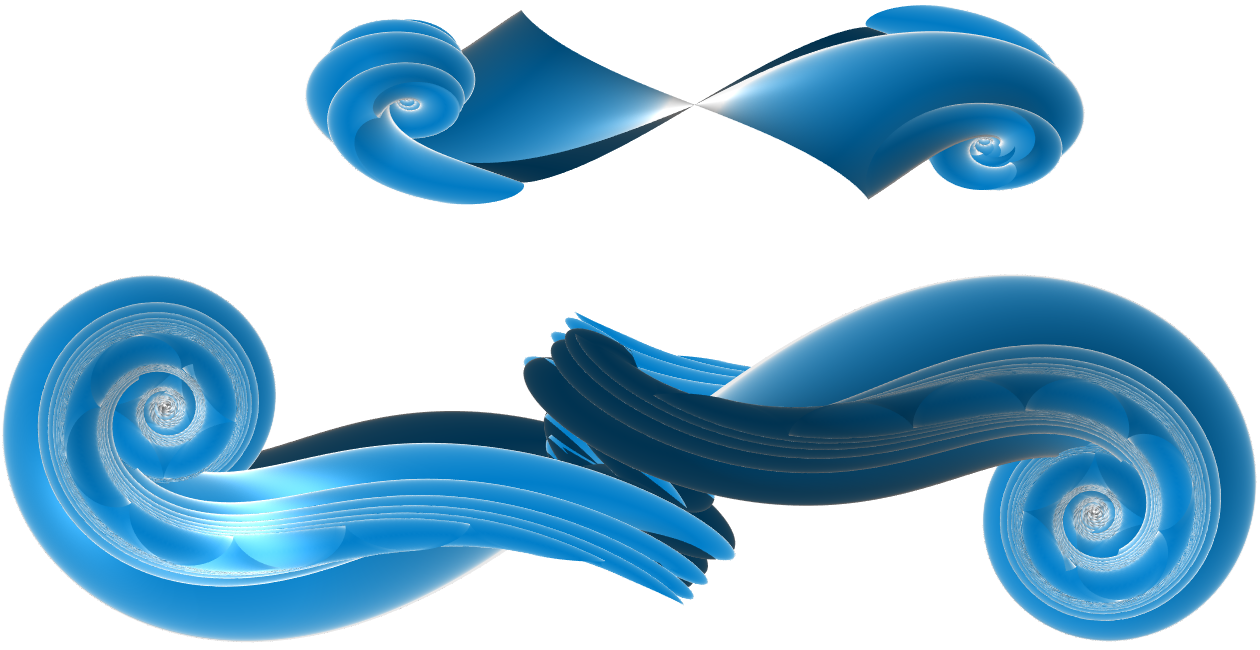}    \\
	\kappa(t)=t, \,\,\, \tau(t)=1/2.  & \kappa(t)=t, \,\,\, \tau(t)=0.
\end{array}
$
\caption{Singular curves with inflections.} \label{inflectionfig1}
\end{figure}

 If $\kappa$ vanishes at just one point we will get a singular curve that is degenerate
at this point, but non-degenerate elsewhere, provided $|\tau|\neq 1$.  The most basic example is 
$\kappa(t)=t$, $\tau(t)=1/2$,  shown in Figure \ref{inflectionfig1}.
At the point $(0,0)$, there are two cuspidal edges crossing each other. 
For the example $\kappa(t)=t$, $\tau(t)=0$ the surface appears to have a degenerate cone point.
The case $\kappa(t)=t^2$, $\tau(t)=1/2$ is also computed and shown in Figure \ref{figureunbounded}. In this case, the
singular set is a single curve through the point $(0,0)$.

If we take $\tau =\pm 1$ at just one point, the surface is not a wave front at this point.
This is because the potential pair is $(\hat \chi, \hat \psi)$, where
\beqas
\hat \chi  &=&
      \frac{1}{2}\left((\tau(x)-1)e_1 \lambda^{-1} + 2 \kappa(x) e_3 + (\tau(x)+ 1)e_1 \lambda \right) \dd x,\\
\hat \psi  &=&
        \frac{1}{2} \left((\tau(y)-1)e_1 \lambda^{-1} + 2 \kappa(y) e_3 + (\tau(y) + 1) e_1 \lambda \right) \dd y,
\eeqas
so exactly one of $\chi_1$ and $\psi_{-1}$ vanishes. The potential pair is semi-regular but not regular.
Moreover, the singular curve must be degenerate at this point, because we showed 
in Section \ref{charsingsection} that if the singular curve is non-degenerate at a point
where the surface is not a wave front, then the curve is a characteristic curve on a \emph{neighbourhood}
of this point, which is not the case here.
The surface shown at Figure \ref{viviani_fig} is generated by the Viviani figure-8 space curve
$\gamma(t)=0.3(1+ \cos(t), \sin(t), 2 \sin(t/2)$.  The torsion takes the values $\pm 1$ twice each,
and at each such point another singular curve branches off from the figure eight 
(Figure \ref{viviani_fig}, right).

\end{example}

\section{Prescribed characteristic singular curves}  \label{charsection}
Now we want to give potentials for non-degenerate \emph{characteristic}
singular curves.   As expected for a Cauchy problem along a characteristic, we will find that 
data along a curve does not specify a unique solution: 
further data must be provided along another, transverse, characteristic curve.
Moreover, with our solution, the non-degeneracy is only guaranteed
in a neighbourhood of the intersection of these two curves.

As explained in Section \ref{charsingsection},  given that the map is semi-regular,
we can assume that box coordinates are chosen such 
that the singular curve is locally given as $\{y=0\}$, and  can choose a local frame satisfying
\bdm
f_x =  \Ad_F e_1, \quad f_y = \Ad_F(ae_1 + be_2), \quad
 f_x \times f_y =  b N, \quad N= \Ad_Fe_3,
\edm
where 
\[
b(x,0)=0, \quad \frac{\partial b}{\partial y}(x,0) \neq 0.
\]
The surface is a wave front at points where $a(x,0) \neq 0$.
  The curve $\gamma(x)=f(x,0)$
is already arc-length parameterised. Hence, differentiating the expression for $f_x$ we
have:
\[
 f_{xx}=\Ad_F[u_0 e_3 + e_1, e_1] = u_0\Ad_F e_2,
\]
along $y=0$.  Thus, up to a change of orientation, $u_0(x,0)= \kappa(x)$, 
the curvature of $\gamma$. Note that if 
$\kappa(x) \neq 0$ for all $x$ then the curve has a well defined 
normal ${\bf n}=\Ad_F e_2$ and hence the binormal is ${\bf b}= \Ad_F e_3=N$. We then have
\[
-\tau n = \frac{\dd {\bf b}}{\dd x} = \Ad_F[\kappa e_3 + e_1, e_3] = -\Ad_F e_2,
\]
from which we conclude that $\tau(x)=1$ along the whole curve. Although the curve
is singular, this is the same property that asymptotic curves (of non-vanishing curvature)
have on a regular pseudospherical surface, namely that $\tau=\pm 1$.
 
Now differentiating the expression $f_x \times f_y = b \Ad_Fe_3$, using $b(x,0)=b_x(x,0)=0$,
we also have
\[
0=-\kappa(x)a(x,0)\Ad_Fe_3.
\]
Hence, if the surface is a wave front we must have $\kappa(x)=0$ for all $x$. In other words, the only possible
non-degenerate characteristic singular curve on a pseudospherical \emph{wave front} is a straight
line.

\begin{theorem}  \label{mainthm3}
Let $I_x$ be an open interval containing $0$, and $\gamma: I_x \to \real^3$  a regular space curve, parameterised by arc-length,
with either non-vanishing curvature function $\kappa$, and constant torsion $\tau=\pm 1$,
or with curvature everywhere zero on $I_x$.
 Let $I_y$ be an open interval containing $0$.
For every choice of differentiable $1$-form of type
\[
 \hat \psi = (\alpha(y) e_1 + \beta(y) e_2) \lambda^{-1} \dd y.
\]
 with
\[
\beta(0)=0, \quad \quad \beta^\prime(0) \neq 0,
\]
and 
\[
\alpha(0)=0, \quad \hbox{if} \quad \kappa \not\equiv 0,
\]
 there corresponds a unique  pseudospherical frontal $f:I_x \times I_y \to \real^3$,  such that
\begin{enumerate}
\item  \label{itema}
$f$ is semi-regular on an open set containing $I_x \times\{0\}$, and
\item \label{itemb}
$f(x,0)=\gamma(x)$ is a characteristic singular curve in the surface, non-degenerate on a neighbourhood
of $(0,0)$. 
\end{enumerate}
  Up to a Euclidean motion,
the surface $f$ is given by the 
d'Alembert method with potential pair $(\hat \chi, \hat \psi)$ on $I_x \times I_y$,
where
\[
\hat \chi = (\kappa(x) e_3 + \lambda e_1) \dd x,
	\]
	and all such surfaces $f$ satisfying \eqref{itema} and \eqref{itemb} are obtained this way.
\end{theorem}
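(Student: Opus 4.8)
The plan is to prove the two assertions of the theorem—the construction and the completeness—separately, with most of the work going into reading off the geometric meaning of the d'Alembert frame along the two characteristic lines $x=0$ and $y=0$.

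For the construction I would first check that $(\hat\chi,\hat\psi)$ is a genuine potential pair: $\hat\chi$ has Fourier support in $\lambda^0,\lambda^1$ and $\hat\psi$ in $\lambda^{-1}$, so both lie in the admissible ranges of Section \ref{gdamethod}, and since the leading coefficient $\chi_1=e_1$ never vanishes the pair is semi-regular. By the correspondence of Section \ref{gdamethod} the resulting frame $\hat F$, and hence $f=\mathcal{S}_1(\hat F)$, is semi-regular throughout $I_x\times I_y$, which gives \eqref{itema}. Next I would read off the boundary frames. Because $\hat\chi$ involves only nonnegative powers of $\lambda$, the loop $\hat X(x)$ lies in $\mathcal{G}^+$, so along $y=0$ (where $\hat Y=I$) the Birkhoff splitting of $\hat X^{-1}\hat Y$ is trivial and $\hat F(x,0)=\hat X(x)$; symmetrically $\hat F(0,y)=\hat Y(y)$. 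Thus the $\dd x$-part of the connection along $y=0$ is exactly $\hat\chi$, giving $U_\mathfrak{p}=e_1$ and $U_\mathfrak{k}=\kappa e_3$ there, while the $\dd y$-part along $x=0$ is $\hat\psi$, giving $V_\mathfrak{p}(0,y)=\alpha(y)e_1+\beta(y)e_2$ and $V_\mathfrak{k}(0,y)=0$.

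With this data I would identify the curve and the singularity. Using $f_x=\Ad_F U_\mathfrak{p}$ from Section \ref{extendedframesect}, along $y=0$ we obtain $f_x=\Ad_F e_1$ (so $f(\cdot,0)$ is unit speed) and $f_{xx}=\kappa\,\Ad_F e_2$; the Frenet computation in Section \ref{charsection} then shows the moving frame is $\Ad_F(e_1,e_2,e_3)$ with curvature $\kappa$ and torsion $\tau=1$, or a straight line when $\kappa\equiv0$. By the fundamental theorem of space curves $f(u,0)=\gamma(u)$ up to a Euclidean motion, the case $\tau=-1$ following by reflection. For the singular set I would invoke the integrability equations of Section \ref{charsingsection}; the only subtlety is that the d'Alembert frame need not satisfy $U_\mathfrak{p}\equiv e_1$ off the curve. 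However $\partial_y U_\mathfrak{p}=[U_\mathfrak{p},V_\mathfrak{k}]$ forces $|U_\mathfrak{p}|$ to be constant in $y$, hence $\equiv 1$, and the gauge rotating $U_\mathfrak{p}$ to $e_1$ is trivial along $y=0$ and (via the identity $\theta_y=-v_0$) automatically annihilates $V_\mathfrak{k}$ there. So the normal form holds along $y=0$, and $a(x,0),b(x,0)$ solve $a_x=\kappa b$, $b_x=-\kappa a$ with initial values $(\alpha(0),\beta(0))=(\alpha(0),0)$ at the corner. When $\kappa\not\equiv0$ the hypothesis $\alpha(0)=0$ makes the initial data vanish, so $a(x,0)=b(x,0)=0$; when $\kappa\equiv0$ one gets $b(x,0)=0$ directly. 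In both cases $b$ vanishes on $y=0$, so it is a characteristic singular curve, and since $\partial_y b(0,0)=\beta'(0)\neq0$, continuity yields non-degeneracy on a neighborhood of $(0,0)$, proving \eqref{itemb}.

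For completeness and uniqueness I would run the argument in reverse. Given any pseudospherical frontal satisfying \eqref{itema}--\eqref{itemb} with the same $\gamma$, Section \ref{charsingsection} provides box coordinates and a gauge in which the singular curve is $y=0$, $U_\mathfrak{p}=e_1$, $V_\mathfrak{k}=0$, and $u_0(x,0)$ equals the curvature $\kappa$, with the curve forced to satisfy $\tau=\pm1$ (or $\kappa\equiv0$). Hence the $\dd x$-part of the connection along $y=0$ is forced to be $\hat\chi$, and the transverse $\dd y$-data along $x=0$ defines a one-form $\hat\psi=(a(0,y)e_1+b(0,y)e_2)\lambda^{-1}\dd y$ of exactly the prescribed type, with $b(0,0)=0$ and $b_y(0,0)\neq0$ from non-degeneracy and $a(0,0)=0$ when $\kappa\not\equiv0$ from the relation $\kappa\,a(x,0)=0$ of Section \ref{charsection}. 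Feeding $(\hat\chi,\hat\psi)$ back into the d'Alembert method reproduces the frame, so every such $f$ is obtained this way and, for fixed $\hat\psi$, is unique up to a Euclidean motion. I expect the main obstacle to be precisely this gauge/propagation step: controlling the frame away from the characteristic $y=0$ well enough to justify that the transverse data carried by $\hat\psi$ propagates, through the integrability system, to force $b\equiv0$ along the singular curve, and to match the two characteristic boundary frames consistently at the corner $(0,0)$. The norm identity $\partial_y|U_\mathfrak{p}|^2=0$ together with the cancellation $\theta_y=-v_0$ of the normalizing gauge are what make this step go through.
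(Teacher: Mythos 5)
Your proposal is correct and follows essentially the same route as the paper: check the pair is an admissible (semi-regular) potential pair, read off the boundary frames from the triviality of the Birkhoff splitting along the two coordinate axes, identify $\gamma$ via the Frenet equations and the fundamental theorem of space curves, and prove uniqueness/completeness by recognizing $\hat \psi$ as the normalized potential of an arbitrary solution. The one step where you diverge is the verification that $y=0$ is a singular curve, non-degenerate near $(0,0)$: the paper extracts the diagonal part $D_0=\mathrm{diag}(e^{i\theta/2},e^{-i\theta/2})$ of $\hat H_+$ from $\hat F=\hat Y \hat H_+^{-1}$ and computes $b(x,y)=\cos\theta\,\beta(y)+\sin\theta\,\alpha(y)$ explicitly, with $\theta_x(x,0)=2\kappa$ and $\theta_y(0,y)=0$, whereas you propagate the corner data $(a,b)(0,0)=(\alpha(0),0)$ along $y=0$ through the integrability system $a_x=u_0 b$, $b_x=-u_0 a$ and then invoke continuity of $b_y$ from $b_y(0,0)=\beta'(0)\neq 0$. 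Both arguments are valid and give the same conclusion; the paper's explicit formula for $b$ carries a little more information about where non-degeneracy fails away from the origin. One remark: your concern that $U_\mathfrak{p}$ might fail to equal $e_1$ off the curve is unnecessary. Since $\hat H_- = I+O_-(\lambda^{-1})$, the $\lambda$-coefficient of the $\dd x$-part of $\hat F^{-1}\dd \hat F$ is exactly $\chi_1=e_1$ at every point of $I_x\times I_y$, which is how the paper obtains the normal form (and hence $v_0\equiv 0$) globally with no gauge or norm-propagation argument needed.
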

\begin{proof}
The $1$-forms defined satisfy the requirements for a potential pair, and therefore
integrating $\hat X^{-1} \dd \hat X = \hat \chi$, and $\hat Y^{-1} \dd \hat Y = \hat \psi$,
with initial conditions $\hat X(0)=I$  and $\hat Y(0)=I$, performing
a Birkhoff decomposition
\[
\hat X^{-1}(x) \hat Y(y) = \hat H_-(x,y) \hat H_+(x,y), \quad
\hat H_\pm(x,y) \in \mathcal{G}^\pm, \quad \hat H_-(x,y)\big|_{\lambda=\infty}=I,
\]
gives us an admissible frame $\hat F= \hat X \hat H_- = \hat Y \hat H_+^{-1}$. 
We write $O_\pm(\lambda^{\pm k})$ for any convergent Fourier series of the form
$\sum_{j=k}^\infty  a_k \lambda^{\pm j}$.
 The normalization of $\hat H_-$
means that its Fourier expansion is $\hat H_- = I + O_-(\lambda^{-1}) $, so
\[
\hat F^{-1} \dd \hat F =  \lambda e_1 \dd x + O_-(1).
\]
Since the coefficient of $\lambda$ is $e_1 \dd x$, 
we can apply the analysis of Section \ref{charsingsection} to conclude that
\[
\hat F^{-1} \dd \hat F  = (u_0 e_3 + \lambda e_1) \dd x + 
    (a(x,y)e_1 + b(x,y)e_2)\lambda^{-1} \dd y.
\]
 Along the curve $y=0$ we have
$\hat Y = I$, and so the unique factor $\hat H_-$ in the Birkhoff decomposition above satisfies
$\hat H_-(x,0)=I$.  Thus $\hat F(x,0)=\hat X(x)$, and, along $y=0$ we have
\[
\hat F^{-1}  \hat F_x 
    = (\kappa(x) e_3 + \lambda e_1).
\]
Hence
\[
u_0(x,0)=\kappa(x).
\]
To check the non-degeneracy condition on $\partial_y b(x,0)$,  we  will use the expression
$\hat F= \hat Y  \hat H_+^{-1}$.  Since $\hat H_+$ is $\mathcal{G}^+$-valued, we can write
\[
\hat H_+ = D_0 + O_+(\lambda),  \quad
  D_0= \hbox{diag}\left(e^{i \theta/2} ,  e^{-i \theta/2} \right).
\]
We have $\hat H_+^{-1}(x,0)=\hat X(x)$, and so, along $y=0$,
\[
\frac{\theta_x(x,0)}{2} e_3 +O_+(\lambda) = \hat H_+ \frac{\partial H_+^{-1}}{\partial x} = 
\hat X^{-1} \frac{\partial \hat X}{\partial x} = \kappa(x) e_3 + \lambda e_1,
\]
whilst along $x=0$, we also have $\hat H_+(0,y)=I$. Hence 
\[
\theta_x(x,0) = 2\kappa(x), \quad \quad \theta_y(0,y) = 0.
\]
 From $\hat F= \hat Y  \hat H_+^{-1}$ we obtain
\[
\hat F^{-1} \dd \hat F = \Ad_{D_0}(\alpha e_1 + \beta e_2) \lambda^{-1} + O_+(\lambda),
\]
which gives
\[
b(x,y) = \cos(\theta(x,y)) \beta(y) + \sin(\theta(x,y)) \alpha(y).
\]
Differentiating this, using $\beta(0)=0$:
\[
\frac{\partial b}{\partial y}(x,0) =  \alpha(0) \theta_y \cos \theta 
 +\beta^\prime (0)  \cos \theta   +   \alpha^\prime(0) \sin \theta.
\]
For the case $\kappa(x)\equiv 0$, we have $\theta_x(x,0)=0$, so $\theta$ is constant along $x=0$, and 
$\cos(\theta(x,0)) = 1$, $\sin(\theta(x,0)=0$ by the initial condition at $(0,0)$. Thus,
\[
\frac{\partial b}{\partial y}(x,0)   =  \alpha(0) \theta_y(x,0)  +\beta^\prime (0).
\]
Since $\theta_y(0,0)=0$ and $\beta^\prime(0) \neq 0$, it follows that the non-degeneracy condition
$b_y(x,0) \neq 0 $ is satisfied on an open set containing $(0,0)$. 
On the other hand, for the case $\kappa \neq 0$, where we take $\alpha(0)=0$,  we have
\[
\frac{\partial b}{\partial y}(x,0)  =\beta^\prime (0)  \cos (\theta(x,0))   +   \alpha^\prime(0) \sin (\theta(x,0)).
\]
In this case,  we use $\cos(\theta(0,0)) = 1$, $\sin(\theta(0,0)=0$ to again conclude that
$b_y(x,0) \neq 0 $ is satisfied on an open set containing $(0,0)$. 

To see that  the singular curve 
$f(x,0)$, of the solution $f$, coincides with $\gamma$,
the discussion preceding the statement of this theorem shows $f(x,0)$
 has curvature $\kappa$ and, if $\kappa$ is non-vanishing,
 constant torsion $\tau=1$.  Since a curve is
determined by its curvature and torsion, we must have,  up to 
a Euclidean motion, $f(x,0)=\gamma(x)$.  If $\kappa$ is everywhere zero, then
the curve is just a straight line segment of the same length as $I_x$,
again identical with $\gamma(x)$ up to a Euclidean motion.

For uniqueness given the potential $\hat \psi$, it is enough to observe
that $\hat \psi$ is a normalized potential, with normalization point $(0,0)$, which
is uniquely determined by the surface $f: I_x \times I_y \to \real^3$ and the choice of
normalization point.
  Thus, given any surface $\tilde f$ satisfying $\tilde f(x,0)=\gamma(x)$, we 
obtain $\hat \chi$ from the knowledge of $\kappa$, and the frame $\tilde{\hat F}(x,0)$, 
and we recover $\hat \psi$ from a normalized Birkhoff decomposition of
 $\tilde{\hat F}(x,y)$ as described at the end of Section \ref{gdamethod}.
Hence $\tilde f = f$.  Since $\hat \psi$ is the most general normalized potential satisfying
the regularity conditions,   all possible solutions are obtained this way.
\end{proof} 

\begin{remark}
\begin{enumerate}
\item Because  $\beta(0)=0$ and $\beta^\prime(0) \neq 0$, we can, on a neighbourhood of $y=0$, change
$y$-coordinates to $\tilde y(y)$ so that $\beta(y) \dd y = \tilde y \dd \tilde y$.  In these coordinates the potential
$\hat \psi$ is of the form
\[
\hat \psi = (\tilde \alpha(\tilde y) e_1 + \tilde y e_2) \lambda^{-1} \dd \tilde y.
\]
Thus, given $\kappa$, the unique solution is determined, on an open set containing the curve,
 by a single function $\tilde \alpha(\tilde y)$ that is arbitrary if $\kappa \equiv 0$ but, in the general 
case must  satisfy $\tilde \alpha(0)=0$ 
\item  
For the case that $\kappa(x) \equiv 0$,  adding the assumption 
$\alpha(0)=0$ guarantees that the entire singular curve is non-degenerate.
\item
Suppose  coordinates are  chosen such that $\beta(y)=y$, as just described. Then, if $\alpha$ is an odd function of $y$ the surface
has a \emph{fold} singularity along $y=0$, i.e $f$ satisfies $f(x,y)=f(x,-y)$.  This can be seen from the symmetry
$\hat \psi(-y)= \hat \psi(y)$.
  Such a singularity, at least if $\alpha$ is analytic, can be ``removed" in the sense that one half
of the folded surface is part of a regular pseudospherical surface which contains the same curve:
 writing $\alpha(y)=y(a_1 + a_3 y^2 + \dots )$, and
 setting $2 \tilde y= y^2$, we have, for $y>0$,  the expressions
 $\tilde \alpha(\tilde y) \dd \tilde y= \alpha(y) \dd y = (a_1 +  a_3 2 \tilde y + a_5 (2\tilde y)^2 + \dots) \dd \tilde y$
and $y \dd y = \dd \tilde y$. Hence the surface corresponding to the pair
 $\hat \psi = (\tilde \alpha(\tilde y) e_1 + e_2) \lambda^{-1} \dd \tilde  y$ and
$\hat \chi = (\kappa(x) e_3 + \lambda e_1) \dd x$ is regular on an open set containing the $x$-axis and
agrees with the folded surface on the set $y>0$.  

 Of course the Lorentz structure corresponding to the
two surfaces are different here at the line $y=0$.  For a given global Lorentz structure there is no way to 
remove this singularity because the vanishing of a $1$-form $g(y) \dd y$ is well defined with respect to changes
of box-charts.  An example of a folded Amsler surface is shown in
Figure \ref{linesfig2}.
\end{enumerate}
\end{remark}

\begin{example}  
Weakly regular characteristic singularities:  These are all given by data of the form $\kappa \equiv 0$, $\beta(y)=y$ and
an arbitrary choice of $\alpha$ with $\alpha(0) \neq 0$.   The singular curve is guaranteed to be non-degenerate in 
a neighbourhood of $(0,0)$.  An example is shown in Figure \ref{linefig}. 
\end{example}

\begin{figure}[ht]
\centering
\begin{small}
$
\begin{array}{ccc}
\includegraphics[height=26mm]{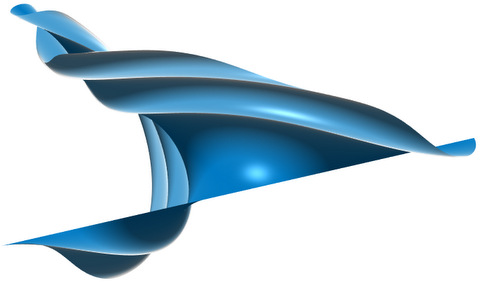}   & \quad
\includegraphics[height=26mm]{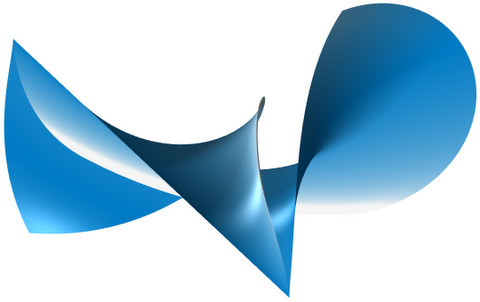}  
 \quad &
\includegraphics[height=26mm]{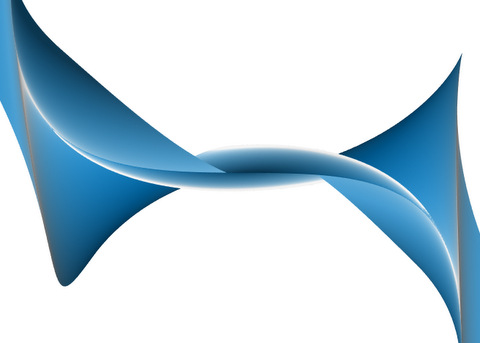} \\
\kappa=0, \,\, \alpha(y)=0 &
\kappa=0, \,\,  \alpha(y)=y^2 & \kappa=1, \,\, \alpha(y)=y^2
\end{array}
$
\end{small}
\caption{Non-weakly regular singular curves. 
 Left: folded Amsler surface. Middle: higher order
cuspidal edge.  Right: Spiral singularity. All have $\beta(y)=y$.
( Examples \ref{nonfrontlinesex}  and \ref{deg_spiralex}).} \label{linesfig2}
\end{figure}   

\begin{example}  \label{nonfrontlinesex} 
Straight lines that are not weakly regular: These are given by $\kappa \equiv 0$, $\beta(y)=y$ and any choice of $\alpha$
with $\alpha(0)=0$.   The entire line is a non-degenerate singularity.  These are all higher order cuspidal edges.   See Figure \ref{linesfig2}.  
If $\alpha$ is an odd function, we have a fold.  If $\alpha$ is not an odd function then we cannot ``remove" the singular
curve as can be done with the fold.  For example, for the case $\alpha(y)=y^2$ and $\beta(y)=y$, 
let $S_+$ denote the surface generated by $(\hat \chi, \hat \psi)$, for $y > 0$.  Then $S_+$ does not extend to
a pseudospherical wave front over the curve $y=0$. If it did, because asymptotic directions are
well defined on a pseudospherical surface, the surface would be generated by
a potential pair  $(\hat \chi, \tilde {\hat \psi})$, where $\hat \chi$ is unchanged and the one-form $\tilde {\hat \psi}$ agrees with
$\hat \psi$ on the set $y \geq 0$, but where $\tilde {\hat \psi}$ is regular at $y=0$.  In other words, we are looking for 
a change of coordinates $\tilde y(y)$ valid on $y>0$ such that the $1$-form $(y^2 , y) \dd y= (y^2 \frac{\dd y}{\dd \tilde y} , y\frac{\dd y}{\dd \tilde y} ) \dd \tilde y$ extends to a regular $1$-form  at $y=0$.  By definition, this means that 
both components are smooth and at least one
 non-vanishing at $y=0$.  If $y \frac{\dd y}{\dd \tilde y}$ is non vanishing, we can assume that $\tilde y$ is chosen so that 
$y \frac{\dd y}{\dd \tilde y} = 1$, that is $\tilde y = y^2/2$, and hence $(y^2, y) \dd y =  (\sqrt{2 \tilde y}, 1)  \dd \tilde y$,
which is not differentiable at $\tilde y=0$.  A similar argument shows that coordinates cannot be found such that
the first component $y^2 \dd y$ is non-zero.
\end{example}

\begin{example}  \label{deg_spiralex}
Figure \ref{linesfig2} (right) shows a  pseudospherical frontal that contains a helix curve. The surface is not
a wave front because the singular curve is characteristic and not a straight line.
 The singularity is non-degenerate in a neighbourhood
of $(0,0)$, but degenerates at some points, which can be seen where it is intersected by other singular curves.
\end{example}
\begin{example}  \label{frontlinesex}
Weakly regular characteristic singularities:  These are all given by data of the form $\kappa \equiv 0$, $\beta(y)=y$ and
an arbitrary choice of $\alpha$ with $\alpha(0) \neq 0$.   The singular curve is guaranteed to be non-degenerate in 
a neighbourhood of $(0,0)$.  An example is shown in Figure \ref{linefig}. 
\end{example}

\section{Examples and numerics}  \label{numericssection}
In this section we use numerics to give a picture of some degenerate singularities, as well as 
to show the global appearance of solutions generated by certain types of singular curve.
\subsection{Degenerate singularities}
In Example \ref{degenexamples} we saw some  degenerate singularities where $\kappa$ vanishes
or $|\tau|$ takes the value $1$ on a singular curve generated by Theorem  \ref{mainthm2}.
Theorem  \ref{mainthm1} is slightly more general, and the condition for a degenerate singularity
for the potential $\hat \eta = (-B(t)e_1 \lambda^{-1} - \beta(t)/2 e_3 + A(t) e_1 \lambda) \dd t$
is that $\beta$ vanishes.  Two examples are shown in Figure  \ref{degen_example}.  
Both are degenerate cone points.

\begin{figure}[ht]
\centering
\begin{small}
$
\begin{array}{cc}
\includegraphics[height=28mm]{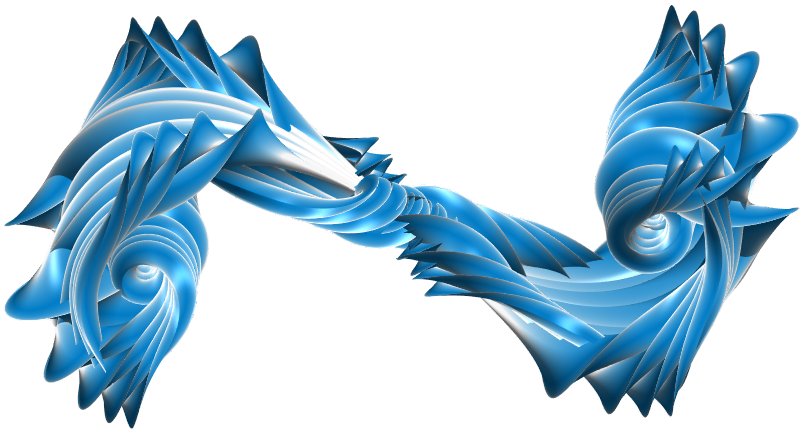}  
 \quad & \quad
\includegraphics[height=26mm]{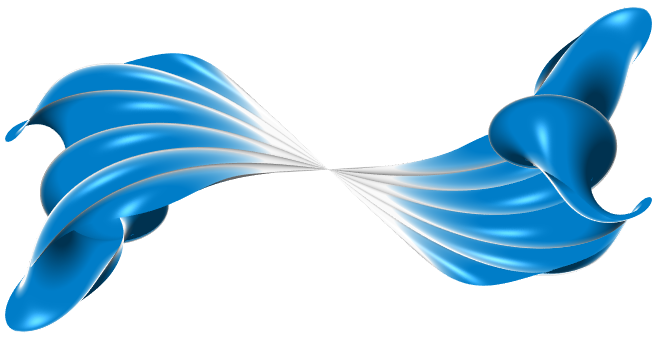}  
\end{array} 
$
\vspace{3ex}\\
$
\begin{array}{cc}
\includegraphics[height=28mm]{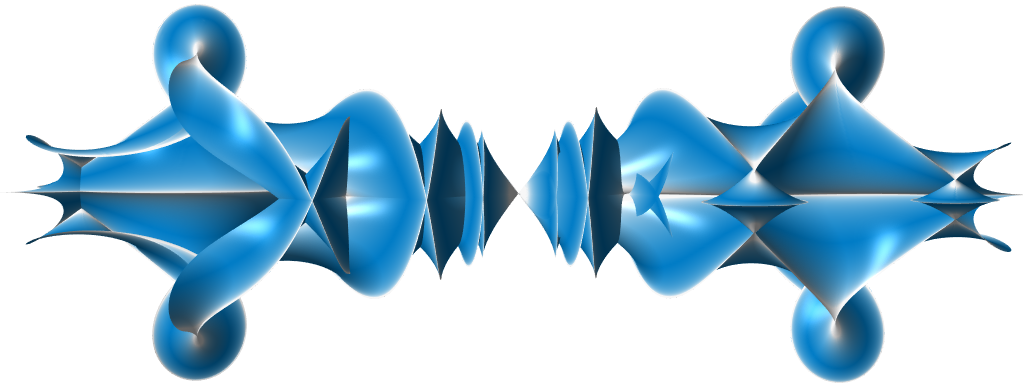}  
 \quad & \quad
\includegraphics[height=26mm]{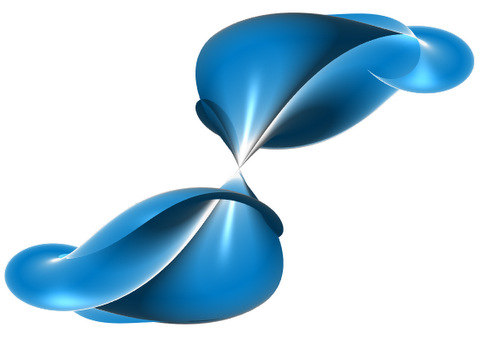} 
\end{array}
$
\end{small}
\caption{Degenerate singularities. Top: $\beta(t)=t$, $A(t)=B(t)=1$. 
Bottom: $\beta(t)=t$, $A(t)=B(t)=-1$. }   \label{degen_example}
\end{figure}   

\subsection{Singularities where the derivative vanishes}  \label{rankzerosection}
Any potential pair $(\hat \chi, \hat \psi)$ with $\chi_1(x_0)= \psi_{-1}(y_0) =0$ produces a 
pseudospherical frontal the derivative of which has rank 0 at $(x_0,y_0)$.
Examples computed with $\hat \chi = \hat \psi =  (-B(t)e_1 \lambda^{-1} - \beta(t)/2 e_3 + A(t) e_1 \lambda) \dd t$,
with $\beta(t)=1$ and $A$ and $B$ vanishing are shown in Figure \ref{rankzero_figure}.

\begin{figure}[ht]
\centering
\begin{small}
$
\begin{array}{cc}
\includegraphics[height=22mm]{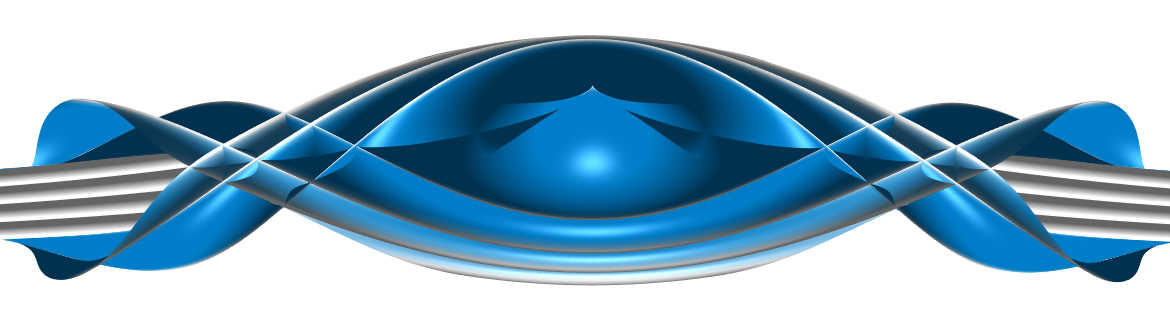}  
 \quad & \quad
\includegraphics[height=22mm]{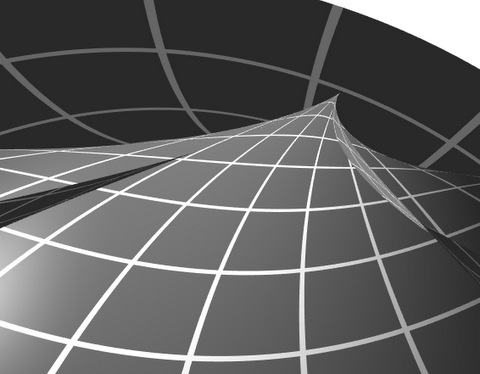}  
\vspace{3ex}\\

\includegraphics[height=26mm]{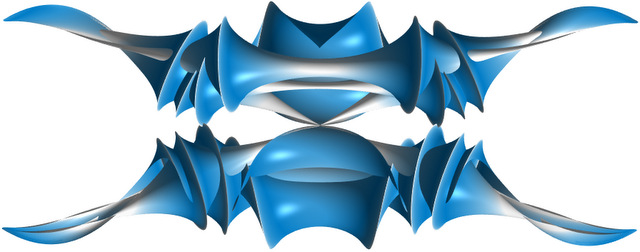}  
 \quad & \quad
\includegraphics[height=26mm]{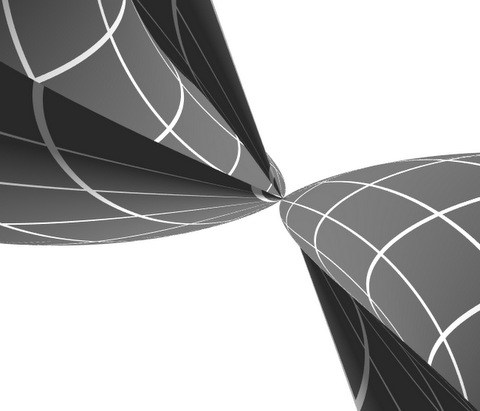}  
\end{array} 
$
\end{small}
\caption{Rank zero singularities. Top: $\beta(t)=1$, $A(t)=B(t)=t$. 
Bottom: $\beta(t)=1$, $A(t)=t$, $B(t)=-t$. (See Section \ref{rankzerosection}).}   \label{rankzero_figure}
\end{figure}

\subsection{Global properties of solutions}
If we consider a surface generated by singular curve data
$(\kappa, \tau)$, 
where $|\kappa(t)| \to \infty$ as $t \to \pm \infty$ and $\tau$ is bounded, then  the solution 
becomes concentrated spatially for large $(u,v)$,
 with a spiral in the $u$ direction and many singularities in the $v$ direction.  This means that 
computing a finite sub-domain gives a realistic sense of what the surface looks like, as in 
Figure \ref{inflectionfig1}.  More examples are shown in Figure \ref{figureunbounded}.
\begin{figure}[ht]
\centering
$
\begin{array}{ccc}
\includegraphics[height=30mm]{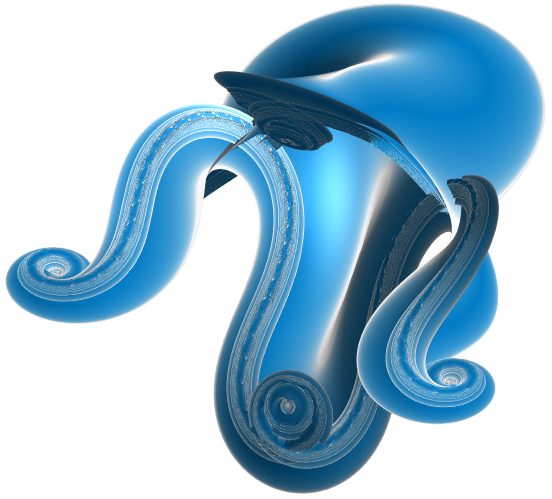}  \,\, &  \, \,
\includegraphics[height=30mm]{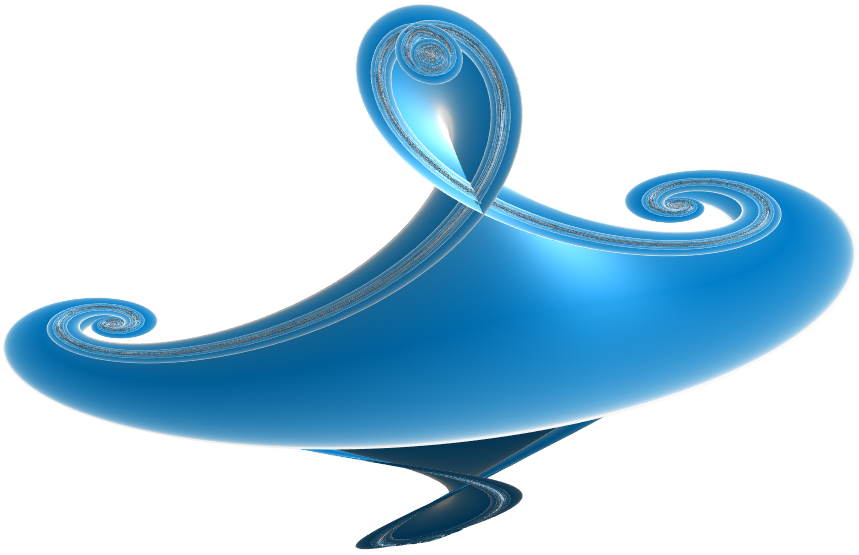} \,\, &  \, \,
\includegraphics[height=30mm]{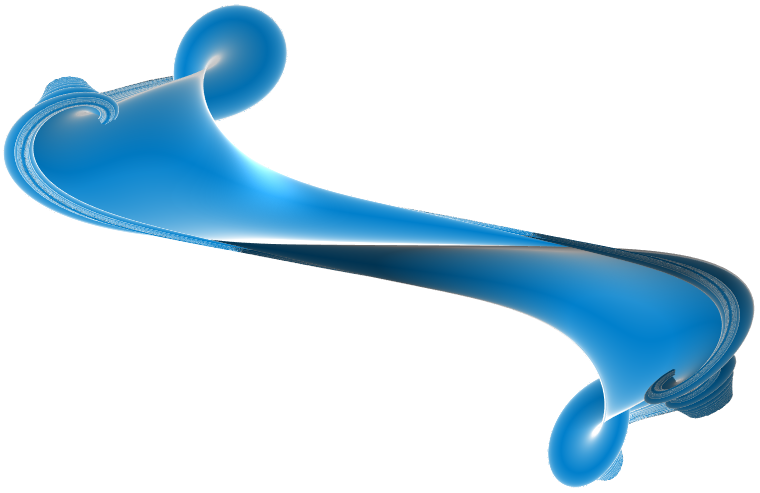}  
\end{array}
$
\caption{Pseudospherical surfaces generated from curves with unbounded curvature functions.
Left:  $\kappa(t) = 2-t^2$, $\tau(t)=0$. Middle: $\kappa(t)=\exp(t^2)$, $\tau(t)=0$. Right:
$\kappa(t)=t^2$ and $\tau(t)=1/2$.   }
 \label{figureunbounded}
\end{figure}

\newpage

\providecommand{\bysame}{\leavevmode\hbox to3em{\hrulefill}\thinspace}
\providecommand{\MR}{\relax\ifhmode\unskip\space\fi MR }
\providecommand{\MRhref}[2]{%
  \href{http://www.ams.org/mathscinet-getitem?mr=#1}{#2}
}
\providecommand{\href}[2]{#2}

\end{document}